\numberwithin{equation}{section}
\title{\textbf{Nearly K\"ahler six-manifolds with two-torus symmetry}}
\date{}
\author{Giovanni Russo and Andrew Swann}
\theoremstyle{plain}
\newtheorem{prop}{Proposition}[section]
\newtheorem{theorem}[prop]{Theorem}
\newtheorem{lemma}[prop]{Lemma}
\theoremstyle{remark}
\newtheorem{remark}[prop]{Remark}
\DeclareMathOperator{\id}{Id}
\DeclareMathOperator{\vol}{\mathrm{vol}}
\DeclareMathOperator{\chair}{\lrcorner \thinspace}
\DeclareMathOperator{\Span}{Span}
\newcommand{\Lie}{\mathcal{L}}
\begin{document}

\maketitle

\begin{abstract}
We consider nearly K\"ahler 6-manifolds with effective 2-torus symmetry. The multi-moment map for the $T^2$-action becomes an eigenfunction of the Laplace operator. At regular values, we prove the $T^2$-action is necessarily free on the level sets and determines the geometry of three-dimensional quotients. An inverse construction is given locally producing nearly K\"ahler six-manifolds from three-dimensional data. This is illustrated for structures on the Heisenberg group.
\end{abstract}

\tableofcontents

\section{Introduction}
\label{intro}

Nearly K\"ahler manifolds were originally introduced by Gray \cite{Gray1} as follows: let $M$ be an almost Hermitian manifold with Riemannian metric $g$, almost complex structure $J$, and Levi-Civita connection $\nabla$, then $M$ is called nearly K\"ahler if $(\nabla_X J)X = 0$ for each vector field $X$ on $M$. As Nagy showed \cite{nagy_six} the important case is that of compact, six-dimensional nearly K\"ahler manifolds that are \emph{not} K\"ahler, and this is what we focus on. In \cite{Reyes1993}, Carri\'on discusses how the definition above in dimension 6 is related to a system of partial differential equations. Let us define the fundamental two-form $\sigma \coloneqq g(J{}\cdot{},{}\cdot{})$. It turns out that Gray's definition is equivalent to the existence of a complex three-form $\psi_{\mathbb{C}} = \psi_+ + i\psi_-$ such that
\begin{equation}
\label{nk_condition}
\begin{cases}
\dif \sigma = 3\psi_+ \\
\dif \psi_- = -2\sigma \wedge \sigma.
\end{cases}
\end{equation}
B\"ar \cite{Bar} describes the interplay between nearly K\"ahler 6-manifolds and $\mathrm{G}_2$-geometry: let $C= (M \times \mathbb{R}_{>0}, g_C \coloneqq t^2g+\dif t^{\otimes 2})$ be a seven-dimensional Riemannian cone over a smooth compact manifold $(M,g)$. Assume that the holonomy of the cone is contained in the exceptional compact Lie group $\mathrm{G}_2$. Then there exists a pair of closed differential forms $\varphi$ and ${*}\varphi$, pointwise equivalent to the model forms on $\mathbb{R}^7$. Viewing $M$ as the level set $t=1$ in $C$, the restriction and contraction by $\partial/\partial t$ of $\varphi$ and ${*}\varphi$ define an $\mathrm{SU}(3)$-structure $(\sigma,\psi_{\mathbb{C}}=\psi_++i\psi_-)$ on $M$ such that
\begin{equation*}
\varphi = t^2\dif t \wedge \sigma + t^3 \psi_+, \quad {*}\varphi = \tfrac{1}{2}t^4\sigma \wedge \sigma + t^3\psi_- \wedge \dif t.
\end{equation*}
The closedness of $\varphi$ and ${*}\varphi$ is equivalent to $\dif \sigma = 3\psi_+, \dif \psi_- = -2\sigma \wedge \sigma$, so $M$ is nearly K\"ahler. Since every manifold with holonomy contained in $\mathrm{G}_2$ is Ricci-flat, nearly K\"ahler 6-manifolds are necessarily Einstein with positive scalar curvature. The latter result was actually proved by Gray (\cite{Gray1976}, Theorem 5.2) before the connection with $\mathrm{G}_2$-holonomy had been noticed. In particular, a complete connected nearly K\"ahler 6-manifold is compact with finite fundamental group, and its universal cover is also a complete nearly K\"ahler manifold. This is why we can restrict to the connected, simply connected case.

There are a few known examples of nearly K\"ahler 6-manifolds: Butruille \cite{Butruille} proved that the only homogeneous ones are $\mathbb{S}^6, \mathbb{CP}^3, F_{1,2}(\mathbb{C}^3)$, and $\mathbb{S}^3 \times \mathbb{S}^3$. Recently, Foscolo and Haskins \cite{Foscolo} showed the existence of the first complete inhomogeneous examples by proving there is at least a cohomogeneity one nearly K\"ahler structure on $\mathbb{S}^6$ and $\mathbb{S}^3 \times \mathbb{S}^3$.

The aim of the present work is to lay the foundations for a path to find new examples with weaker symmetry: our starting remark is that the homogeneous cases listed in \cite{Butruille} and the examples in \cite{Foscolo} always contain a 2-torus $T^2$ as group of symmetries. So we start introducing nearly K\"ahler 6-manifolds with 2-torus symmetry and construct a multi-moment map for the torus action. A remarkable result is that the multi-moment map is an eigenfunction of the Laplace operator (cf.\ \cite{Nagy}). After discussing some consequences of the latter fact, we use this map to construct a 3-manifold via $T^2$-reduction. Then we describe the inverse process: we get the necessary conditions to construct a principal $T^2$-bundle over a 3-manifold, and find the evolution equations that must be satisfied by the geometric structure on its total space in order to get a six-dimensional nearly K\"ahler manifold. To this purpose, we mimic the construction developed by Madsen and Swann in \cite{MadsenSwann} on the reduction of torsion-free $\mathrm{G}_2$-manifolds.

\medbreak
\textbf{Acknowledgements}. We thank Paul-Andi Nagy and Uwe Semmelmann for useful discussions. Both authors are supported by the Danish Council for Independent Research | Natural Sciences Project DFF - 6108-00358, and by the Danish National Research Foundation grant DNRF95 (Centre for Quantum Geometry of Moduli Spaces).

\section{Torus symmetry and multi-moment maps}
\label{second_section}

Let $M$ be a closed, connected, six-dimensional smooth manifold, and equip it with an $\mathrm{SU}(3)$-structure given by a Riemannian metric $g$, an almost complex structure $J$, isometry of each tangent space, and a complex volume form $\psi_{\mathbb{C}} = \psi_++i\psi_-$ of type $(3,0)$. Consider the two-form $\sigma \coloneqq g(J{}\cdot{},{}\cdot{})$. We say that $(M,g,J,\psi_{\mathbb{C}})$ is \emph{nearly K\"ahler} if the equations \eqref{nk_condition} are satisfied. From now on we assume that our structure has an effective $T^2$-symmetry, so that a 2-torus acts effectively on $M$ preserving $g,J$ and $\psi_{\pm}$. Denote by $U$ and $V$ the generating vector fields for the action. Since $T^2$ is an Abelian group, we get $\sbr{U,V}=0$.

We introduce a multi-moment map for the torus action, adapting the construction of Madsen and Swann \cite{MadsenSwann} to the present case. Let us define the \emph{Lie kernel} $P_{\mathfrak{t}^2}$ as the $\mathfrak{t}^2$-submodule of $\Lambda^2 \mathfrak{t}^2$ given by $P_{\mathfrak{t}^2} \coloneqq \ker \left(L \colon \Lambda^2 \mathfrak{t}^2 \rightarrow \mathfrak{t}^2 \right)$, where $L$ is the $\mathfrak{t}^2$-linear map induced by the Lie bracket. Since $T^2$ is Abelian, $P_{\mathfrak{t}^2} = \Lambda^2 \mathfrak{t}^2 \cong \mathbb{R}$. Observe that by the first equation in \eqref{nk_condition} the three-form $\psi_+$ is closed, so the pair $(M,3\psi_+)$ is a \emph{strong geometry} (cf.\ \cite{MadsenSwann}). Now consider the map
\begin{equation*}
\nu_M \colon M \rightarrow P_{\mathfrak{t}^2}^* \cong \mathbb{R}, \quad \nu_M(p) \coloneqq \sigma_p(U_p,V_p).
\end{equation*}
Recall that the $T^2$-action preserves $g$ and $J$, so it preserves $\sigma$, therefore the Lie derivatives $\Lie_U \sigma$ and $\Lie_V \sigma$ vanish. Further, since $U$ and $V$ commute we have $\Lie_V(U \chair \sigma) = U \chair \Lie_V \sigma = 0$. The action preserves $U$ and $V$ as well, so the function $\nu_M$ is invariant under the $T^2$-action. We can then compute $\dif \nu_M$ using Cartan's formula:
\begin{align*}
\dif \nu_M = \dif \left(V \chair U \chair \sigma\right) & = \Lie_V (U \chair \sigma) - V \chair \dif \left(U \chair \sigma\right) \\
& = -V \chair \Lie_U \sigma + V \chair U \chair \dif \sigma = 3\psi_+(U,V,{}\cdot{}).
\end{align*}
This shows that $\nu_M$ is a multi-moment map for the $T^2$-action on $M$.

In what follows we will denote $g(U,U),g(U,V),g(V,V)$ by $g_{UU},g_{UV},g_{VV}$ and define a real function $h$ on $M$ satisfying
\begin{equation}
\label{h_property}
h^2 = g_{UU}g_{VV}-g_{UV}^2.
\end{equation}
When there is no danger of confusion we still use the same notation pointwise. Lastly, when we work on the complexified tangent bundle of $M$, we extend $g,J$ and $\psi_{\pm}$ by $\mathbb{C}$-linearity in all their arguments.

Let $\{F_k,JF_k\}_{k=1,2,3}$ be an $\mathrm{SU}(3)$-basis of $T_pM$ and $\{f_k,Jf_k\}_{k=1,2,3}$ be its dual. Then, at $p$,
\begin{align}
\label{sigma_pointwise}
\sigma & = f_1 \wedge Jf_1 + f_2 \wedge Jf_2 + f_3 \wedge Jf_3, \\
\psi_{\mathbb{C}} & = \bigl(f_1+iJf_1\bigr) \wedge \bigl(f_2+iJf_2\bigr) \wedge \bigl(f_3+iJf_3\bigr).
\end{align}
The volume form splits into real and imaginary parts:
\begin{align}
\label{real_volume}
\psi_+ & = f_1 \wedge f_2 \wedge f_3 - Jf_1 \wedge Jf_2 \wedge f_3 - f_1 \wedge Jf_2 \wedge Jf_3 - Jf_1 \wedge f_2 \wedge Jf_3, \\
\label{imaginary_volume}
\psi_- & = f_1 \wedge f_2 \wedge Jf_3 - Jf_1 \wedge Jf_2 \wedge Jf_3 + f_1 \wedge Jf_2 \wedge f_3 + Jf_1 \wedge f_2 \wedge f_3.
\end{align}
Using the action of $\mathrm{SU}(3)$, we can assume without loss of generality that $U_p = x_0F_1$, for some real number $x_0$, and $V_p = x_1F_1+y_1JF_1+x_2F_2$, for some real numbers $x_1,y_1,x_2$. Since $x_1^2+y_1^2+x_2^2 = g_{VV}, \nu_M(p) = g(JU_p,V_p)$ and $g_{UV}=g_{UU}^{1/2}x_1$, we get
\begin{equation*}
U_p = g_{UU}^{1/2}F_1, \quad g_{UU}^{1/2}V_p = \Bigl(g_{UV}F_1+\nu_MJF_1+\bigl(h^2-\nu_M^2\bigr)^{1/2}F_2\Bigr).
\end{equation*}
Thus, in general $\dif \nu_M = 3\psi_+(U,V,{}\cdot{}) = 3\bigl(h^2-\nu_M^2\bigr)^{1/2}f_3$ pointwise.
Lastly, observe that
\begin{equation}
\label{useful_formula}
V \chair U \chair \left(\sigma \wedge \sigma\right) = 2\bigl(\nu_M \sigma - (U \chair \sigma) \wedge (V \chair \sigma)\bigr).
\end{equation}
This formula is obtained by expanding the left-hand side.
\section{Properties of the multi-moment map and regular values}
Using the set-up we can then prove the following result:
\begin{lemma}
\label{laplace_prop}
Let $\Delta$ be the Laplace operator on $C^{\infty}(M)$, defined as $\Delta = \dif^{\thinspace *}\negthinspace\dif \coloneqq -{*}\negthinspace\dif{*}\negthinspace\dif{}$. Then
\begin{equation}
\label{laplace}
\Delta \nu_M = 24 \nu_M.
\end{equation}
\end{lemma}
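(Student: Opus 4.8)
The plan is to compute $\Delta \nu_M = -{*}\dif{*}\dif\nu_M$ directly from the identity $\dif\nu_M = 3\psi_+(U,V,{}\cdot{}) = 3\,(U \chair V \chair \psi_+)$, which was established in Section~\ref{second_section}. First I would rewrite $\dif\nu_M$ in a form amenable to applying the Hodge star: since $\psi_+ = {*}\psi_-$ (a standard $\mathrm{SU}(3)$ identity, which I would either recall or verify from the pointwise expressions \eqref{real_volume}--\eqref{imaginary_volume}), we have $U \chair V \chair \psi_+ = U \chair V \chair {*}\psi_- = {*}(\psi_- \wedge V^\flat \wedge U^\flat)$ up to a sign depending on orientation conventions, where $U^\flat, V^\flat$ are the metric duals. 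Hence ${*}\dif\nu_M = \pm 3\,\psi_- \wedge V^\flat \wedge U^\flat$, a five-form on $M$.

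The key step is then to differentiate this five-form. Using $\dif(\psi_- \wedge V^\flat \wedge U^\flat) = \dif\psi_- \wedge V^\flat \wedge U^\flat - \psi_- \wedge \dif V^\flat \wedge U^\flat + \psi_- \wedge V^\flat \wedge \dif U^\flat$, I would handle the three terms as follows. For the first, the nearly K\"ahler equation \eqref{nk_condition} gives $\dif\psi_- = -2\,\sigma\wedge\sigma$, so the first term is $-2\,\sigma\wedge\sigma\wedge V^\flat\wedge U^\flat$; contracting, $V \chair U \chair (\sigma\wedge\sigma)$ is controlled by the formula \eqref{useful_formula}, and wedging back with $U^\flat \wedge V^\flat$ should reconstitute a multiple of $\nu_M\,\vol$ together with terms that cancel. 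For the remaining two terms I need $\dif U^\flat$ and $\dif V^\flat$: since $U,V$ are Killing fields, $\dif U^\flat = 2\nabla U^\flat$ is the associated two-form, but more usefully $U \chair \dif U^\flat = -\Lie_U U^\flat + \dif(g_{UU}) \cdot(\text{stuff})$ — more precisely $\Lie_U U^\flat = 0$ by invariance gives $U \chair \dif U^\flat = -\tfrac12\dif g_{UU}$, and similarly mixed contractions produce $\dif g_{UV}$, $\dif g_{VV}$. The point is that $\psi_- \wedge \dif V^\flat \wedge U^\flat$ and $\psi_- \wedge V^\flat \wedge \dif U^\flat$, being top-degree, can be evaluated pointwise using the explicit adapted frame from Section~\ref{second_section} in which $U_p = g_{UU}^{1/2}F_1$ and $g_{UU}^{1/2}V_p = g_{UV}F_1 + \nu_M JF_1 + (h^2-\nu_M^2)^{1/2}F_2$; in that frame $\psi_- \wedge U^\flat$ and $\psi_- \wedge V^\flat$ pick out specific components, and one reads off the coefficient of $\vol$.

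I expect the main obstacle to be bookkeeping: getting all signs consistent (orientation, the sign in ${*}{*}$ on forms of the relevant degree in dimension $6$, the sign in $U \chair {*}\alpha = {*}(\alpha \wedge U^\flat)$) and making sure the contributions of $\dif U^\flat$ and $\dif V^\flat$ are genuinely computed rather than assumed to vanish — a priori these two-forms need not be simply related to $\sigma$ or $\psi_\pm$, so the honest route is the pointwise frame computation. A cleaner alternative, which I would try first, is to avoid $\dif U^\flat$ entirely: write $\dif\nu_M = 3\,(U \chair V \chair \psi_+)$ and use that $\psi_+$ is \emph{closed}, together with the Cartan-formula manipulations already rehearsed for $\dif\nu_M$ itself, to express $\dif{*}\dif\nu_M$ as $\pm 3\,V \chair U \chair \dif\psi_-$ modulo Lie-derivative terms that vanish by $T^2$-invariance; then \eqref{nk_condition} turns $\dif\psi_-$ into $-2\sigma\wedge\sigma$ and \eqref{useful_formula} finishes it, with the factor $24 = 3 \cdot 2 \cdot 4$ emerging from $3$ (from $\dif\sigma = 3\psi_+$), $2$ (from $\dif\psi_- = -2\sigma\wedge\sigma$), and a $4$ from the two double-contractions of $\sigma\wedge\sigma$. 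Tracking that the remaining frame-dependent pieces in \eqref{useful_formula}, namely $(U\chair\sigma)\wedge(V\chair\sigma)$, contribute correctly after applying ${*}$ is the one calculation I would actually carry out in detail.
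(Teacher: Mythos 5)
Your second, ``cleaner'' route is essentially the paper's proof, and it is the one that works: the paper writes ${*}\dif\nu_M=\tfrac32\sigma\wedge\sigma\wedge\alpha_0$ with $\alpha_0=V\chair U\chair\psi_-$ (one checks pointwise that this equals $-3\,\psi_-\wedge V^\flat\wedge U^\flat$, consistent with your five-form up to the sign you left open), then uses $\dif(\sigma\wedge\sigma)=0$ together with Cartan's formula and $T^2$-invariance to get $\dif\alpha_0=V\chair U\chair\dif\psi_-=-2\,V\chair U\chair(\sigma\wedge\sigma)$, and finishes with \eqref{useful_formula}. Two corrections to your sketch of this route. First, ``$\dif{*}\dif\nu_M=\pm3\,V\chair U\chair\dif\psi_-$'' is a degree mismatch; the correct statement is $\dif{*}\dif\nu_M=\tfrac32\,\sigma\wedge\sigma\wedge\bigl(V\chair U\chair\dif\psi_-\bigr)$. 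Second, your anticipated factorisation $24=3\cdot2\cdot4$ is not how the eigenvalue arises: inserting \eqref{useful_formula} gives $\dif{*}\dif\nu_M=-6\nu_M\,\sigma^{\wedge3}+6\,\sigma\wedge\sigma\wedge(U\chair\sigma)\wedge(V\chair\sigma)=-36\nu_M\vol_M+12\nu_M\vol_M$, since $\sigma\wedge\sigma\wedge(U\chair\sigma)\wedge(V\chair\sigma)=2\nu_M\vol_M$; so $24=36-12$, with the $(U\chair\sigma)\wedge(V\chair\sigma)$ term entering with the \emph{opposite} sign to the main term. You flagged exactly this term as the calculation to do in detail, so this is a correctable prediction rather than a logical error, but do not expect a clean product.

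Your first route, by contrast, has a genuine gap as described. Writing ${*}\dif\nu_M=\pm3\,\psi_-\wedge V^\flat\wedge U^\flat$ and applying $\dif$ produces $\psi_-\wedge\dif V^\flat\wedge U^\flat$ and $\psi_-\wedge V^\flat\wedge\dif U^\flat$, and these do \emph{not} vanish: the $\dif\psi_-$ term alone contributes only $-12\nu_M\vol_M$ (since $\sigma\wedge\sigma\wedge V^\flat\wedge U^\flat=-2\nu_M\vol_M$), so the $\dif U^\flat,\dif V^\flat$ terms carry half the answer. Moreover they cannot be ``evaluated pointwise using the adapted frame'': the frame normalisation at $p$ fixes $U_p,V_p$ but says nothing about $\dif U^\flat$ at $p$, which is first-derivative data; and since $\psi_-\wedge V^\flat\wedge\dif U^\flat=\pm\langle V\chair\psi_+,\dif U^\flat\rangle\vol_M$, the full two-form $\dif U^\flat$ enters, not merely its contractions with $U$ and $V$ (which is all that $\Lie_UU^\flat=0$ etc.\ control --- and note $U\chair\dif U^\flat=-\dif g_{UU}$, without your factor $\tfrac12$). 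The whole point of packaging the five-form as $\sigma\wedge\sigma\wedge(V\chair U\chair\psi_-)$ is that the factor $\sigma\wedge\sigma$ is closed and the contraction $V\chair U\chair\psi_-$ can be differentiated by Cartan's formula using only invariance, so that $\dif U^\flat$ and $\dif V^\flat$ never appear.
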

\begin{proof}
Firstly, we show that
\begin{equation}
\label{Hodge1}
{*}\negthinspace\dif \nu_M = {*}\bigl(3\psi_+(U,V,{}\cdot{})\bigr) = \frac{3}{2}\sigma \wedge \sigma \wedge \alpha_0,
\end{equation}
where $\alpha_0 \coloneqq V \chair U \chair \psi_-$. From equations \eqref{sigma_pointwise} and \eqref{imaginary_volume} we get pointwise
\begin{align*}
\sigma \wedge \sigma & = 2(f_1 \wedge Jf_1 \wedge f_2 \wedge Jf_2 + f_1 \wedge Jf_1 \wedge f_3 \wedge Jf_3 + f_2 \wedge Jf_2 \wedge f_3 \wedge Jf_3), \\
\alpha_0 & = \bigl(h^2 - \nu_M^2\bigr)^{1/2}Jf_3.
\end{align*}
Hence $\sigma \wedge \sigma \wedge \alpha_0 = 2\bigl(h^2 - \nu_M^2\bigr)^{1/2}f_1 \wedge Jf_1 \wedge f_2 \wedge Jf_2 \wedge Jf_3$, and
$$3\psi_+(U,V,{}\cdot{}) \wedge (\sigma \wedge \sigma \wedge \alpha_0) = \frac{2}{3}\bigl\lVert 3\psi_+(U,V,{}\cdot{}) \bigr\rVert_g^2 \vol_M$$
from which we get \eqref{Hodge1}. Thus $\dif{*}\negthinspace\dif\nu_M = \frac{3}{2}\sigma \wedge \sigma \wedge \dif\alpha_0$, since $\dif \left(\sigma \wedge \sigma\right) = 0$. By Cartan's formula $\dif\alpha_0 = \dif \left(V \chair U \chair \psi_-\right) = -2V \chair U \chair (\sigma \wedge \sigma)$. Then, formula \eqref{useful_formula} yields
\begin{align*}
\dif{*}\negthinspace\dif \nu_M & = -6\nu_M \sigma \wedge \sigma \wedge \sigma + 6\sigma \wedge \sigma \wedge (U \chair \sigma) \wedge (V \chair \sigma) \\
& = -36\nu_M \vol_M + 12 \nu_M \vol_M \\
& = -24\nu_M \vol_M,
\end{align*}
and we are done.
\end{proof}
A consequence of this result is
\begin{prop}
\label{average}
The average value of the multi-moment map $\nu_M$ is $0$. Moreover, the range of $\nu_M$ is a compact interval containing $0$ in its interior.
\end{prop}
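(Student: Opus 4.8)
The plan is to exploit the eigenfunction equation $\Delta \nu_M = 24\nu_M$ from Lemma \ref{laplace_prop} together with the fact that $M$ is closed. First I would integrate both sides of \eqref{laplace} over $M$ against the Riemannian volume. Since $\Delta = \dif^{\,*}\dif$ is a divergence-type operator, the integral of $\Delta \nu_M$ over the closed manifold $M$ vanishes (equivalently, $\int_M \Delta\nu_M\,\vol_M = \int_M \langle \dif\nu_M, \dif 1\rangle\,\vol_M = 0$). Hence $24\int_M \nu_M\,\vol_M = 0$, which forces $\int_M \nu_M\,\vol_M = 0$; dividing by the total volume gives that the average value of $\nu_M$ is $0$.

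For the second assertion, $M$ is compact and $\nu_M$ is continuous, so the image $\nu_M(M)\subset\mathbb{R}$ is a compact interval $[a,b]$ (it is connected since $M$ is connected). Since the average of $\nu_M$ is $0$ and $\nu_M$ is not identically zero, we must have $a < 0 < b$ unless $\nu_M\equiv 0$. To rule out the degenerate case $\nu_M\equiv 0$, I would argue that if $\nu_M$ vanished identically then $\dif\nu_M = 3\psi_+(U,V,\cdot\,) = 0$ everywhere, and combined with the pointwise formula $\dif\nu_M = 3(h^2-\nu_M^2)^{1/2}f_3$ this would give $h\equiv 0$, i.e.\ $g_{UU}g_{VV} - g_{UV}^2 \equiv 0$ by \eqref{h_property}; this means $U$ and $V$ are everywhere linearly dependent (as tangent vectors), contradicting effectiveness of the $T^2$-action on the open dense set where the action is locally free, or at least at some point. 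So $0$ lies in the interior of $[a,b]$.

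The main obstacle I anticipate is the last step: carefully excluding $\nu_M\equiv 0$. One must be sure that effectiveness of the action indeed forces $U\wedge V\neq 0$ somewhere, which requires knowing the $T^2$-action is not everywhere trivial in a way that makes $U,V$ pointwise dependent. Alternatively, and perhaps more cleanly, one can invoke $\Delta\nu_M = 24\nu_M$ directly: if $\nu_M\equiv 0$ then $\psi_+(U,V,\cdot\,)=0$, but a cleaner route is to note that $\nu_M\equiv 0$ would make $24$ an eigenvalue with eigenfunction $0$, which is vacuous, so instead one shows $\nu_M\not\equiv 0$ from the geometry — e.g.\ at a point where the orbit is two-dimensional, the tangent vectors $U_p,V_p$ span a $J$-invariant plane on which $\sigma$ restricts to a volume form, or one uses the normal form $U_p = g_{UU}^{1/2}F_1$, $g_{UU}^{1/2}V_p = g_{UV}F_1 + \nu_M JF_1 + (h^2-\nu_M^2)^{1/2}F_2$ derived above to see that $U_p\wedge V_p \neq 0$ is equivalent to $h(p)\neq 0$, and $h\equiv 0$ would mean the orbits are at most one-dimensional everywhere, contradicting that $T^2$ acts effectively (a faithful action of $T^2$ cannot have all orbits of dimension $\leq 1$). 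This dichotomy is the only non-formal point; everything else is integration by parts and compactness.
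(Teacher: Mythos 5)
Your proposal is correct and follows essentially the same route as the paper: integrate $\Delta\nu_M = 24\nu_M$ over the closed manifold to get zero average, then rule out $\nu_M\equiv 0$ by observing that $\dif\nu_M = 3\bigl(h^2-\nu_M^2\bigr)^{1/2}f_3 \equiv 0$ together with $\nu_M\equiv 0$ forces $U$ and $V$ to be pointwise linearly dependent, contradicting effectiveness of the $T^2$-action (the paper cites Kobayashi--Nomizu for exactly the fact you flag as the ``only non-formal point''). The paper phrases the dependence argument via the decomposition $V=\eta U+W$ with $W\perp U,JU$ rather than via $h\equiv 0$, but the content is identical.
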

\begin{proof}
Since $M$ has no boundary we can apply Stokes' theorem and Lemma \ref{laplace_prop}:
\begin{align*}
24 \int_M  \nu_M \vol_M = &\int_M \Delta \nu_M \vol_M = \int_M \dif^{\thinspace *}\negthinspace\dif\nu_M \wedge {*}1 = -\int_M {*}\negthinspace\dif{*}\negthinspace\dif \nu_M \wedge {*}1\\
= & \int_M \dif \thinspace ({*}\negthinspace\dif\nu_M) = \int_{\partial M} {*}\negthinspace \dif \nu_M = 0.
\end{align*}
So we have our first claim $$\int_M \nu_M \vol_M = 0.$$
However, $\nu_M$ is not constantly zero, and we prove this by contradiction. Assume that $M = \nu_M^{-1}(0)$. Then $\sigma(U,V) \equiv 0$, so $V = \eta U + W$ for some real function $\eta$ and some $W$ orthogonal to $U$ and $JU$. Using the action of $\mathrm{SU}(3)$ we can assume pointwise $U=x_0F_1$, and $W = y_0F_2$ for some real numbers $x_0,y_0$. Then $0 = \dif \nu_M = 3\psi_+(U,V,{}\cdot{}) = 3x_0y_0f_3$. If $U \neq 0$, this implies $y=0$, namely $V = \eta U$. Therefore $V$ and $U$ are linearly dependent pointwise, hence the action of $T^2$ is not effective on $M$ (\cite{KobNom}, Proposition 4.1), which is a contradiction. This allows us to write $\nu_M\colon M \rightarrow \sbr{a,b}$, where $a < 0 < b$, for $M$ is compact and connected, and $\nu_M$ is smooth.
\end{proof}

Now assume $s$ is a regular value for $\nu_M$. In the next proposition we show that the $T^2$-action on $\nu_M^{-1}(s)$ is \emph{free}, so $\nu_M^{-1}(s)/T^2$ is a smooth $3$-manifold and $\nu_M^{-1}(s) \rightarrow \nu_M^{-1}(s)/T^2$ is a principal $T^2$-bundle. Its base space is the \emph{$T^2$-reduction of $M$ at level $s$}. We will study the geometry of the quotients $\nu_M^{-1}(s)/T^2$ in Section \ref{three_dimensional_quotients}.

\begin{prop}
\label{action_free}
The multi-moment map $\nu_M$ has non-zero regular values. For any regular value $s$ the $T^2$-action on $\nu_M^{-1}(s)$ is free. Thus $\nu_M^{-1}(s)/T^2$ are smooth three-dimensional manifolds.
\end{prop}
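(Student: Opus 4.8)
The plan is to exploit the pointwise normal form established in Section~\ref{second_section}: using the $\mathrm{SU}(3)$-action we may assume $U_p = g_{UU}^{1/2}F_1$ and $g_{UU}^{1/2}V_p = g_{UV}F_1 + \nu_M JF_1 + (h^2-\nu_M^2)^{1/2}F_2$, so that $\dif\nu_M = 3(h^2-\nu_M^2)^{1/2}f_3$ at $p$. First I would settle the existence of non-zero regular values. By Proposition~\ref{average} the range of $\nu_M$ is a compact interval $[a,b]$ with $a<0<b$, and by Sard's theorem almost every value in this interval is regular; since regular values form a full-measure (hence dense) subset and $0$ is a single point, there must be non-zero regular values. (One can also argue that the set of regular values is open and its complement has measure zero, so it meets both $(a,0)$ and $(0,b)$.)

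Next, fix a non-zero regular value $s$ and a point $p \in \nu_M^{-1}(s)$. The key observation is that regularity of $s$ forces $\dif\nu_M|_p \neq 0$, hence $h^2 - \nu_M^2 = h^2 - s^2 > 0$ at $p$; in particular $h(p) \neq 0$, so by \eqref{h_property} the Gram matrix $\begin{pmatrix} g_{UU} & g_{UV} \\ g_{UV} & g_{VV}\end{pmatrix}$ of $U_p,V_p$ is positive definite. Therefore $U_p$ and $V_p$ are linearly independent, and in fact span a $2$-plane on which the metric is nondegenerate. The isotropy subalgebra of the $T^2$-action at $p$ is $\{aU + bV : aU_p + bV_p = 0\} = 0$, so the isotropy group is discrete. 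I would then upgrade ``discrete'' to ``trivial'': since $T^2$ is connected and acts on the compact manifold $M$, and the set where $U,V$ are linearly independent is $T^2$-invariant, the isotropy group $\Gamma_p$ acts on the $2$-plane $\Span\{U_p,V_p\}$ (tangent to the orbit) and on a complementary invariant subspace; but an element of $\Gamma_p$ fixing $p$ and acting trivially on the orbit direction, combined with effectiveness of the $T^2$-action on $M$ (and connectedness of the orbit through $p$), must be the identity. More cleanly: a compact Lie group acting with all isotropy groups discrete on a connected manifold, where one orbit is already known to have trivial stabiliser, need not have all stabilisers trivial, so instead I would argue directly that $\Gamma_p \subset T^2$ is finite and acts freely on the orbit $T^2 \cdot p \cong T^2/\Gamma_p$; freeness of the $T^2$-action on the whole level set then follows once we know $\Gamma_p$ is trivial, which we get from the fact that $\Gamma_p$ fixes $p$, preserves $J$ and $g$, hence lies in $\mathrm{SU}(3) \cap T^2$ acting on $T_pM$, and any such element fixing the orbit tangent plane pointwise and lying in the image of a torus must be trivial by effectiveness.

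The main obstacle is precisely this last point: passing from ``the infinitesimal stabiliser vanishes'' (which is immediate from $U_p, V_p$ being linearly independent) to ``the global stabiliser is trivial, not merely finite.'' The clean route, which I expect the authors to take, is to show that a nontrivial finite stabiliser would contradict effectiveness: if $\gamma \in T^2$ fixes $p$, then since $\gamma$ preserves $g$ and $J$ it lies in $\mathrm{SU}(T_pM) \cong \mathrm{SU}(3)$, and since $\gamma$ also preserves the orbit $T^2\cdot p$ and acts on it as a translation by a point of $T^2/\Gamma_p$, while fixing $p$, it must act trivially on the tangent plane $\Span\{U_p,V_p\}$ and on $JU_p$; a holomorphic isometry of $\mathbb{C}^3$ fixing $p$ and restricting to the identity on a complex line and one more real direction is determined on enough of $T_pM$ that, being in the image of the connected group $T^2$, it is forced to be $\id$. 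Once freeness is in hand, $\nu_M^{-1}(s)$ is a smooth $4$-manifold (regular level set) with a free $T^2$-action, so the quotient $\nu_M^{-1}(s)/T^2$ is a smooth manifold of dimension $4 - 2 = 3$ and the projection is a principal $T^2$-bundle, as claimed.
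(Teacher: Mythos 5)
Your overall architecture is the same as the paper's: Sard's theorem for the existence of (non-zero) regular values, linear independence of $U_p,V_p$ from $\dif\nu_M\neq 0$ to get a locally free action, and then an upgrade from discrete to trivial stabilisers. The first two steps are fine. The problem is the upgrade, which you correctly identify as the crux but do not actually close. Two things are missing. First, at the level of $T_pM$: knowing that $\gamma\in\Gamma_p$ fixes $U_p$, $V_p$ and commutes with $J$ only pins $\gamma$ down on the complex $2$-plane $\Span_{\mathbb C}\{U_p,V_p\}$; a unitary map that is the identity there can still rotate the orthogonal complex line by $e^{i\theta}$. You need either the determinant-one condition coming from preservation of $\psi_{\mathbb C}$, or (as the paper does) the explicit observation that $\Gamma_p$ also fixes $W\coloneqq\psi_+(U,V,{}\cdot{})^{\sharp}$ and $JW$, which together with $U,JU,V,JV$ span all of $T_pM$. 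You gesture at $\mathrm{SU}(3)$ but never run this argument, writing only that $\gamma$ is ``determined on enough of $T_pM$''.

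Second, and more seriously: even once you know $\dif\gamma_p=\id_{T_pM}$, the conclusion $\gamma=e$ does not follow ``by effectiveness'' applied at the tangent space. Effectiveness is a statement about the action on $M$: a non-identity element must move \emph{some} point of $M$, which is compatible a priori with $\gamma$ fixing $p$ to first order. The missing bridge is the rigidity of isometries: an isometry of a connected Riemannian manifold that fixes a point with identity differential is the identity map. The paper supplies exactly this via the tubular neighbourhood theorem, showing that $A=\{q\in M: \gamma\cdot q=q,\ \dif\gamma_q=\id\}$ is open and closed, hence all of $M$; only \emph{then} does effectiveness force $\gamma=e$. Your appeals to connectedness of $T^2$ and of the orbit are not relevant here --- $\gamma$ is just some element of a finite group, and nothing about the connectedness of the group it sits in constrains it. With these two repairs (the third complex direction, and the open--closed rigidity argument) your proof becomes essentially the paper's.
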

\begin{proof}
By Sard's theorem the set of critical values has Lebesgue measure $0$ in $\sbr{a,b}$, so we can assert there exist infinitely many regular values for $\nu_M$ in $(a,b) \neq \varnothing$. Let $s$ be any of them. Then $(\dif \nu_M)_p$ has rank $1$ for each $p \in \nu_M^{-1}(s)$, so $3\psi_+(U,V,{}\cdot{})_p = (\dif \nu_M)_p \neq 0$. Thus $U,V$ are linearly independent over $\mathbb{C}$ on $\nu_M^{-1}(s)$, and this yields a locally free action on $\nu_M^{-1}(s)$.

On the other hand, if $H$ is the stabilizer in $T^2$ of some $p \in \nu_M^{-1}(s)$, it preserves $g,J$ and $\psi_{\pm}$ by hypothesis, and also $U$ and $V$. Hence, $H$ fixes $U,JU,V,JV,W \coloneqq \psi_+(U,V,{}\cdot{})^{\sharp}$ and $JW$, so all of $T_pM$. Then, by the Tubular Neighbourhood Theorem, the set $A \coloneqq \{p \in M : h\cdot p = p, \dif h_p = \id_{T_pM} \text{ for each } h \in H\}$ is open and closed in $M$. Since we assume $M$ connected and $A$ is not empty, $A = M$. If $H$ is not trivial, then we get a contradiction, because the action is effective on $M$. So $H$ must be trivial, and our claim is proved.
\end{proof}

\section{Reduction to three-manifolds}
\label{three_dimensional_quotients}

In order to study the geometric structure of the quotients $Q_s^3 \coloneqq \nu_M^{-1}(s)/T^2$, with $s \neq 0$ a regular value for $\nu_M$, we need to determine which forms on $\nu_M^{-1}(s)$ descend to them. We use the following criterion: a $k$-form $\beta$ on $\nu_M^{-1}(s)$ descends to $Q_s^3$ if and only if it is basic, that is $\Lie_U \beta = \Lie_V \beta = 0$ and $U \chair \beta = V \chair \beta = 0$. Before starting, let us define the dual forms of $U$ and $V$:
\begin{equation*}
\vartheta_1 \coloneqq h^{-2}\bigl(g_{VV}U^{\flat}-g_{UV}V^{\flat}\bigr), \quad \vartheta_2 \coloneqq h^{-2}\bigl(g_{UU}V^{\flat}-g_{UV}U^{\flat}\bigr),
\end{equation*}
where $h$ satisfies \eqref{h_property}. The pair $(\vartheta_1,\vartheta_2)$ is a connection one-form for the $T^2$-bundle $\nu_M^{-1}(s) \rightarrow \nu_M^{-1}(s)/T^2$. The invariant functions we find are $g_{UU},g_{UV},g_{VV}$. Then we have basic one-forms
\begin{equation*}
\alpha_0 \coloneqq V \chair U \chair \psi_-, \quad \alpha_1 \coloneqq s\vartheta_1 + V \chair \sigma, \quad \alpha_2 \coloneqq s\vartheta_2 - U \chair \sigma,
\end{equation*}
and lastly the two-forms $U \chair \psi_+, V \chair \psi_+$ are basic.

The next step is to specify $g,\sigma,\psi_{\pm}$ on $M$ in terms of the forms $\dif \nu_M, \vartheta_1, \vartheta_2, \alpha_0, \alpha_1, \alpha_2$. We work on $M$, pointing out what holds in particular on the level sets $\nu_M^{-1}(s)$. Using the basis of the previous section, we find pointwise:
\begin{align*}
\dif \nu_M & = 3\bigl(h^2-\nu_M^2\bigr)^{1/2} f_3, \\[5pt]
\vartheta_1 & = h^{-2}g_{UU}^{-1/2}\Bigl(h^2f_1 - g_{UV}\nu_MJf_1 - g_{UV}\bigl(h^2-\nu_M^2\bigr)^{1/2}f_2\Bigr),\\[5pt]
\vartheta_2 &= h^{-2}g_{UU}^{1/2} \Bigl(\nu_MJf_1 + \bigl(h^2-\nu_M^2\bigr)^{1/2}f_2\Bigr), \\[5pt]
\alpha_0 &= \bigl(h^2-\nu_M^2\bigr)^{1/2} Jf_3,\\[5pt]
\alpha_1 &= h^{-2}g_{UU}^{-1/2} \bigl(h^2-\nu_M^2\bigr)^{1/2} \Bigl(g_{UV}\bigl(h^2-\nu_M^2\bigr)^{1/2}Jf_1 - g_{UV}\nu_Mf_2 + h^2Jf_2\Bigr),\\[5pt]
\alpha_2 &= h^{-2}g_{UU}^{1/2} \bigl(h^2-\nu_M^2\bigr)^{1/2} \Bigl(\nu_Mf_2-\bigl(h^2-\nu_M^2\bigr)^{1/2}Jf_1\Bigr).
\end{align*}
Now let $s \neq 0$. The last five forms are linearly independent on $\nu_M^{-1}(s)$ if and only if $h^2 \neq s^2$. Observe that $h^2 - \nu_M^2 = \lVert (h^2-\nu_M^2)^{1/2}f_3 \rVert_g^2 =\tfrac{1}{9} \lVert \dif \nu_M \rVert_g^2 = \lVert \psi_+(U,V,{}\cdot{}) \rVert_g^2$, and this quantity is non-zero under our assumptions. Therefore, we can compute the general expressions of $g, \sigma, \psi_{\pm}$ on $M$:
\begin{align}
\label{metric} g & = \frac{1}{9\bigl(h^2 - \nu_M^2\bigr)}\dif \nu_M^{\otimes 2}+ g_{UU}\vartheta_1^{\otimes 2} + g_{VV}\vartheta_2^{\otimes 2} + g_{UV}\bigl(\vartheta_1 \otimes \vartheta_2+\vartheta_2 \otimes \vartheta_1\bigr) \nonumber \\
& \qquad +\frac{1}{h^2 - \nu_M^2}\Bigl( \alpha_0^{\otimes 2}+ g_{UU}\alpha_1^{\otimes 2} + g_{VV}\alpha_2^{\otimes 2}+g_{UV}\bigl(\alpha_1 \otimes \alpha_2 + \alpha_2 \otimes \alpha_1\bigr)\Bigr); \\[10pt]
\label{sigma} \sigma & = \frac{1}{3\bigl(h^2 - \nu_M^2\bigr)}\dif \nu_M \wedge \alpha_0+ \nu_M\vartheta_1 \wedge \vartheta_2 - \vartheta_1 \wedge \alpha_2 + \vartheta_2 \wedge \alpha_1 - \frac{\nu_M}{h^2 - \nu_M^2}\alpha_1 \wedge \alpha_2;\\[10pt]
\label{psi_+} \psi_+ & = \frac{1}{3\bigl(h^2-\nu_M^2\bigr)}\dif \nu_M \wedge \Bigl(\bigl(h^2-\nu_M^2\bigr)\vartheta_1 \wedge \vartheta_2+\nu_M \bigl(\vartheta_1 \wedge \alpha_2 - \vartheta_2 \wedge \alpha_1\bigr) - \alpha_1 \wedge \alpha_2 \Bigr) \nonumber \\
& \qquad - \frac{1}{h^2 - \nu_M^2}\Bigl(\vartheta_1 \wedge \bigl(g_{UU} \alpha_1 + g_{UV}\alpha_2\bigr) + \vartheta_2 \wedge \bigl(g_{UV} \alpha_1 + g_{VV} \alpha_2\bigr)\Bigr) \wedge \alpha_0; \\[10pt]
\label{psi_-}\psi_- & = \frac{1}{3\bigl(h^2 - \nu_M^2\bigr)} \dif \nu_M\wedge \Bigl(\vartheta_1 \wedge \bigl(g_{UU}\alpha_1 + g_{UV} \alpha_2\bigr) + \vartheta_2 \wedge \bigl(g_{UV}\alpha_1+ g_{VV} \alpha_2\bigr) \Bigr) \nonumber \\
& \qquad + \frac{1}{h^2 - \nu_M^2}\Bigl(\bigl(h^2-\nu_M^2\bigl)\vartheta_1 \wedge \vartheta_2 + \nu_M\bigl(\vartheta_1 \wedge \alpha_2 - \vartheta_2 \wedge \alpha_1\bigr) - \alpha_1 \wedge \alpha_2\Bigr)\wedge \alpha_0.
\end{align}

If we use the nearly K\"ahler structure equations \eqref{nk_condition} we get further relationships. The cotangent space of $M$ splits as the direct sum $V \oplus H$, where $\vartheta_i \in V, i = 1,2$ and $H$ contains $\dif \nu_M, \alpha_k, k = 0,1,2$. Comparing coefficients in $\dif\sigma = 3\psi_+$ we obtain
\begin{gather}
\label{theta_2*}
\nu_M \dif \vartheta_2 = \dif \alpha_2 + \frac{1}{h^2 - \nu_M^2}\bigl(3g_{UU}\alpha_1 \wedge \alpha_0 + 3g_{UV}\alpha_2 \wedge \alpha_0+\nu_M \dif \nu_M \wedge \alpha_2 \bigr), \\[10pt]
\label{theta_1*}
\nu_M \dif \vartheta_1 = \dif \alpha_1 - \frac{1}{h^2 -\nu_M^2}\bigl(3g_{UV}\alpha_1 \wedge \alpha_0 + 3g_{VV}\alpha_2 \wedge \alpha_0 - \nu_M \dif \nu_M \wedge \alpha_1 \bigr), \\[10pt]
\label{forme1*}
\begin{split}
\dif \vartheta_1 \wedge \alpha_2 - \dif \vartheta_2 \wedge \alpha_1 & = \Biggl( \frac{2\nu_M^2}{\bigl(h^2 - \nu_M^2\bigr)^2}\dif \nu_M  - \frac{\nu_M}{\bigl(h^2 - \nu_M^2\bigr)^2}\dif\thinspace (h^2) \Biggr) \wedge \alpha_2 \wedge \alpha_1 \\
& \qquad + \dif \nu_M \wedge \Biggl(\frac{1}{3\bigl(h^2 - \nu_M^2\bigr)^2}\dif\thinspace (h^2) \wedge \alpha_0 - \frac{1}{3\bigl(h^2 - \nu_M^2\bigr)}\dif \alpha_0 \Biggr) \\
& \qquad + \frac{\nu_M}{h^2 - \nu_M^2}\dif \left(\alpha_2 \wedge \alpha_1\right).
\end{split}
\end{gather}

The equation $\dif \psi_- = -2\sigma \wedge \sigma$ gives
\begin{equation}
\label{alpha_0}
\dif \alpha_0 = - \frac{4\nu_M}{3\bigl(h^2 - \nu_M^2\bigr)}\dif \nu_M \wedge \alpha_0 +\frac{4h^2}{h^2 - \nu_M^2}\alpha_1 \wedge \alpha_2,
\end{equation}
\begin{align}
\label{forme2*}
\dif \vartheta_2 \wedge \alpha_0 & = \frac{-1}{3\bigl(h^2 - \nu_M^2\bigr)^2} \Bigl(\dif \nu_M \wedge \bigl(g_{UU} \alpha_1 + g_{UV} \alpha_2\bigr) +3\nu_M\alpha_0 \wedge \alpha_2 \Bigr)\wedge \dif \thinspace (h^2) \nonumber \\
& \qquad - \frac{1}{3\bigl(h^2 - \nu_M^2\bigr)}\Bigl(\dif \nu_M \wedge \dif \,\bigl(g_{UU}\alpha_1 + g_{UV} \alpha_2\bigr)+3\nu_M \dif \alpha_2 \wedge \alpha_0\Bigr) \nonumber \\
& \qquad - \frac{h^2 - 3\nu_M^2}{3\bigl(h^2 - \nu_M^2\bigr)^2}\dif \nu_M \wedge \alpha_0 \wedge \alpha_2, \\
\label{forme3*}
\dif \vartheta_1 \wedge \alpha_0 & = \frac{1}{3\bigl(h^2 - \nu_M^2\bigr)^2} \Bigl(\dif \nu_M \wedge \bigl(g_{UV} \alpha_1 + g_{VV} \alpha_2\bigr) -3\nu_M \alpha_0 \wedge \alpha_1 \Bigr) \wedge \dif \thinspace (h^2) \nonumber \\
& \qquad + \frac{1}{3\bigl(h^2 - \nu_M^2\bigr)}\Bigl(\dif \nu_M \wedge \dif \,\bigl(g_{UV} \alpha_1+ g_{VV} \alpha_2\bigr)-3\nu_M \dif \alpha_1 \wedge \alpha_0\Bigr) \nonumber \\
& \qquad - \frac{h^2 -3\nu_M^2}{3\bigl(h^2 - \nu_M^2\bigr)^2}\dif \nu_M \wedge \alpha_0 \wedge \alpha_1.
\end{align}
The relations among $\alpha_0,\alpha_1,\alpha_2$ on the $T^2$-reduction at level $s$ are then
\begin{align}
\label{alpha_0_ls}
\dif \alpha_0 & = \frac{4h^2}{h^2 - s^2}\alpha_1 \wedge \alpha_2, \\
\label{alpha_1_ls}
\dif \alpha_1 \wedge \alpha_0 & = \frac{s^2}{h^2 \bigl(h^2 - s^2\bigr)}\dif\thinspace (h^2) \wedge \alpha_1 \wedge \alpha_0, \\
\label{alpha_2_ls}
\dif \alpha_2 \wedge \alpha_0 & = \frac{s^2}{h^2 \bigl(h^2 - s^2\bigr)}\dif\thinspace (h^2) \wedge \alpha_2 \wedge \alpha_0.
\end{align}
Define $f\coloneqq 4h^2/(h^2-s^2)$. Observe that $f>4$, and
\begin{equation*}
\frac{\dif f}{f} = -\frac{s^2}{h^2 \bigl(h^2 - s^2\bigr)}\dif\thinspace (h^2).
\end{equation*}
Hence we can summarise our results as follows.
\begin{prop}
\label{5d_geometry}
On the level sets $\nu_M^{-1}(s)$, with $s \neq 0$ regular value for $\nu_M$, we get
\begin{align}
\label{dtheta}
s\dif \vartheta_1 & = \dif \alpha_1 - \frac{1}{h^2 - s^2}\bigl(3g_{UV}\alpha_1 + 3g_{VV}\alpha_2\bigr) \wedge \alpha_0, \\
s\dif \vartheta_2 & = \dif \alpha_2 + \frac{1}{h^2 - s^2}\bigl(3g_{UU}\alpha_1 + 3g_{UV}\alpha_2\bigr) \wedge \alpha_0.
\end{align}
\end{prop}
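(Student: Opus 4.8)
The plan is to derive equations \eqref{dtheta} directly from \eqref{theta_1*} and \eqref{theta_2*} by restricting to a level set and exploiting the fact that on $\nu_M^{-1}(s)$ the function $\nu_M$ equals the constant $s$, so that $\dif\nu_M$ pulls back to zero. First I would recall that $\nu_M^{-1}(s) \hookrightarrow M$ is an embedded submanifold (since $s$ is a regular value by Proposition \ref{action_free}) and that pulling back a form along this inclusion kills every term containing a factor of $\dif\nu_M$. Applying this pullback to \eqref{theta_1*}, the terms $\tfrac{\nu_M}{h^2-\nu_M^2}\dif\nu_M\wedge\alpha_1$ vanishes, $\nu_M$ becomes the constant $s$, and $h^2-\nu_M^2$ becomes $h^2-s^2$; rearranging gives exactly the first line of \eqref{dtheta}. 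The same pullback applied to \eqref{theta_2*} yields the second line. This is essentially the content of Proposition \ref{5d_geometry}, together with \eqref{alpha_0_ls}--\eqref{alpha_2_ls}, which arise the same way from \eqref{alpha_0}, \eqref{forme2*}, \eqref{forme3*}: restricting \eqref{alpha_0} to the level set drops the $\dif\nu_M$-term and substitutes $h^2-s^2$ in the denominator, while restricting \eqref{forme2*} and \eqref{forme3*} collapses each right-hand side to the single surviving term $-\tfrac{1}{3(h^2-s^2)^2}\cdot 3\nu_M\alpha_0\wedge\alpha_2 \wedge \dif(h^2)$ (respectively with $\alpha_1$), which after using $\dif\alpha_2 \wedge \alpha_0$ being wedged with things and simplifying with $\nu_M = s$ gives \eqref{alpha_1_ls}, \eqref{alpha_2_ls}.

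Concretely, I would organise the proof of the restricted structure equations as follows. Begin by observing that on $\nu_M^{-1}(s)$ we have $\nu_M \equiv s$ and $\dif\nu_M|_{\nu_M^{-1}(s)} = 0$, so $h^2 - \nu_M^2$ restricts to $h^2 - s^2$, which is nonzero wherever $h^2 \neq s^2$; but in fact on the whole level set $h^2 - s^2 = h^2 - \nu_M^2 = \tfrac19\lVert\dif\nu_M\rVert_g^2 \geq 0$ and it equals the squared norm of $\psi_+(U,V,\cdot)$, so $h^2 > s^2$ there (the form $\psi_+(U,V,\cdot) = \tfrac13\dif\nu_M$ is nowhere zero on a regular level set), giving $f = 4h^2/(h^2-s^2) > 4$. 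Next, substitute these restrictions into \eqref{theta_1*} and \eqref{theta_2*}: the term $\nu_M\dif\nu_M\wedge\alpha_1$ (resp.\ $\alpha_2$) vanishes, $\nu_M$ on the left becomes $s$, and dividing by $s \neq 0$ is legitimate since we assumed $s \neq 0$. This produces exactly \eqref{dtheta}.

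I do not expect a serious obstacle here: the statement is a straightforward specialisation of equations already established on all of $M$ to a regular level set, using only that $\dif\nu_M$ pulls back to zero and $\nu_M$ becomes constant. The one point requiring a word of care is the justification that $h^2 - s^2 \neq 0$ on $\nu_M^{-1}(s)$, so that the coefficients $\tfrac{1}{h^2-s^2}$ are well-defined functions on the level set; this follows from the identity $h^2 - \nu_M^2 = \lVert\psi_+(U,V,\cdot)\rVert_g^2$ recorded before \eqref{metric}, together with the non-vanishing of $\dif\nu_M = 3\psi_+(U,V,\cdot)$ at regular values, which was already used in Proposition \ref{action_free}. A secondary bookkeeping point is to check that the coefficients $g_{UU}, g_{UV}, g_{VV}$ and $\alpha_0, \alpha_1, \alpha_2$ appearing in \eqref{theta_1*}, \eqref{theta_2*} are themselves basic (invariant and horizontal), so the restricted identity genuinely lives on $\nu_M^{-1}(s)$; but this was verified in the list of invariant functions and basic forms at the start of Section \ref{three_dimensional_quotients}, so nothing new is needed.
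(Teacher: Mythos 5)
Your proposal is correct and follows the paper's own route: Proposition \ref{5d_geometry} is obtained exactly by pulling \eqref{theta_1*} and \eqref{theta_2*} back to the level set, where $\nu_M\equiv s$ and $\dif\nu_M$ vanishes, with the nonvanishing of $h^2-s^2$ guaranteed by $h^2-\nu_M^2=\tfrac19\lVert\dif\nu_M\rVert_g^2$ at regular points. The paper presents this as an immediate summary of \eqref{theta_2*}--\eqref{forme3*} rather than as a separate argument, and your write-up supplies the same specialisation with the appropriate care about well-definedness.
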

\begin{prop}
\label{3d_geometry}
Define $f \coloneqq 4h^2/(h^2-s^2)$. The relations among $\alpha_0,\alpha_1,\alpha_2$ on the $T^2$-reduction at level $s \neq 0$ are given by
\begin{equation}
\label{relations_alpha}
\dif \alpha_0 = f\alpha_1 \wedge \alpha_2, \quad
\dif \alpha_1 \wedge \alpha_0 = -\frac{\dif f}{f} \wedge \alpha_1 \wedge \alpha_0, \quad
\dif \alpha_2 \wedge \alpha_0 = -\frac{\dif f}{f} \wedge \alpha_2 \wedge \alpha_0.
\end{equation}
\end{prop}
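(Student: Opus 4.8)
The plan is to read Proposition~\ref{3d_geometry} off the three identities \eqref{alpha_0_ls}, \eqref{alpha_1_ls}, \eqref{alpha_2_ls} by absorbing the whole $h$-dependence of their coefficients into the single function $f \coloneqq 4h^2/(h^2-s^2)$; since $f$ is a function of the invariants $g_{UU},g_{UV},g_{VV}$ it descends to $Q_s^3$, so the rewritten relations live there. Concretely, \eqref{alpha_0_ls} is literally $\dif\alpha_0 = f\,\alpha_1\wedge\alpha_2$ by the definition of $f$, while differentiating $f$ with $s$ held fixed gives $\dif f = -4s^2\,\dif(h^2)/(h^2-s^2)^2$, hence $\dif f/f = -\tfrac{s^2}{h^2(h^2-s^2)}\,\dif(h^2)$; substituting this into \eqref{alpha_1_ls} and \eqref{alpha_2_ls} turns them into $\dif\alpha_1\wedge\alpha_0 = -\tfrac{\dif f}{f}\wedge\alpha_1\wedge\alpha_0$ and $\dif\alpha_2\wedge\alpha_0 = -\tfrac{\dif f}{f}\wedge\alpha_2\wedge\alpha_0$. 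So, granting \eqref{alpha_0_ls}--\eqref{alpha_2_ls}, the proof is a one-line computation, and the only point of care is the sign coming from reordering wedge factors.

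It then remains to recall why \eqref{alpha_0_ls}--\eqref{alpha_2_ls} hold. On $\nu_M^{-1}(s)$ one has $\nu_M\equiv s$, so $\dif\nu_M$ pulls back to zero, while $h^2-s^2 = h^2-\nu_M^2 = \tfrac{1}{9}\lVert\dif\nu_M\rVert_g^2 > 0$ because $s$ is regular; in particular $0<s^2<h^2$, so $f = 4/(1-s^2/h^2) > 4$. Restricting the ambient identity \eqref{alpha_0} gives $\dif\alpha_0 = \tfrac{4h^2}{h^2-s^2}\alpha_1\wedge\alpha_2$, which is \eqref{alpha_0_ls}. For the other two I would restrict \eqref{forme2*} and \eqref{forme3*}, wedge \eqref{dtheta} with $\alpha_0$ to obtain $\dif\vartheta_i\wedge\alpha_0 = \tfrac{1}{s}\dif\alpha_i\wedge\alpha_0$, collect the $\dif\alpha_i\wedge\alpha_0$ terms using $\tfrac{1}{s}+\tfrac{s}{h^2-s^2}=\tfrac{h^2}{s(h^2-s^2)}$, and reorder the wedge factors; this yields \eqref{alpha_1_ls} and \eqref{alpha_2_ls}.

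Finally, to justify the ambient identities \eqref{alpha_0}, \eqref{forme2*}, \eqref{forme3*} and \eqref{dtheta} (equivalently \eqref{theta_1*}, \eqref{theta_2*} restricted), I would insert the explicit expressions \eqref{sigma} for $\sigma$ and \eqref{psi_-} for $\psi_-$ in terms of $\dif\nu_M,\vartheta_1,\vartheta_2,\alpha_0,\alpha_1,\alpha_2$ — these six forms are a coframe on a regular level set precisely because $h^2\neq s^2$ there — into $\dif\psi_- = -2\,\sigma\wedge\sigma$ and $\dif\sigma = 3\psi_+$, expand using $\dif(\sigma\wedge\sigma)=0$, Cartan's formula, and \eqref{useful_formula}, and compare components in the bigrading of $\Lambda^\bullet T^*M$ induced by the splitting $T^*M = V\oplus H$ with $\vartheta_1,\vartheta_2$ spanning $V$. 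This coefficient matching is the only substantial part of the argument, and I expect it to be long rather than genuinely hard; everything after it, including the passage to $f$ in the statement, is bookkeeping.
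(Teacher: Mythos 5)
Your proposal is correct and follows essentially the same route as the paper: the paper likewise obtains \eqref{alpha_0_ls}--\eqref{alpha_2_ls} by restricting \eqref{alpha_0}, \eqref{forme2*}, \eqref{forme3*} to the level set (where $\dif\nu_M$ pulls back to zero) together with Proposition~\ref{5d_geometry}, and then restates them via the identity $\dif f/f = -\tfrac{s^2}{h^2(h^2-s^2)}\dif(h^2)$. Your sign bookkeeping in $\alpha_0\wedge\alpha_1\wedge\dif(h^2) = -\dif(h^2)\wedge\alpha_1\wedge\alpha_0$ and the coefficient $\tfrac{1}{s}+\tfrac{s}{h^2-s^2}=\tfrac{h^2}{s(h^2-s^2)}$ both check out.
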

\begin{remark}
\label{betas}
Define $\beta_0 = \alpha_0$ and $\beta_i = f\alpha_i, i = 1,2$. The equations in \eqref{relations_alpha} are then equivalent to
\begin{equation*}
\dif \beta_0 = \frac{1}{f}\beta_1 \wedge \beta_2, \quad \dif \beta_1 \wedge \beta_0 = 0, \quad \dif \beta_2 \wedge \beta_0 = 0.
\end{equation*}
\end{remark}

\section{Inverse construction}
\label{inverse_process}

Now we wish to invert the construction described so far. Assume we are given a three-dimensional smooth manifold $Q^3$, and let $g_{UU},g_{UV},g_{VV}$ be three $s$-dependent functions on $Q^3$ such that $g_{UU} > 0$ and $g_{UU}g_{VV}-g_{UV}^2>0$. We define the latter quantity as $h^2 \coloneqq g_{UU}g_{VV}-g_{UV}^2$. Let $f > 4$ be a real function and $\alpha_0,\alpha_1,\alpha_2$ be a basis of $s$-dependent one-forms satisfying \eqref{relations_alpha}. Our first goal is to construct a principal $T^2$-bundle over $Q^3$. Let
\begin{align}
\label{big_theta_1}
\Theta_1 & \coloneqq \frac{1}{s}\biggl(\dif \alpha_1 - \frac{3}{h^2-s^2}\bigl(g_{UV}\alpha_1+g_{VV}\alpha_2\bigr) \wedge \alpha_0 \biggr), \\
\label{big_theta_2}
\Theta_2 & \coloneqq \frac{1}{s}\biggl(\dif \alpha_2 + \frac{3}{h^2-s^2}\bigl(g_{UU}\alpha_1+g_{UV}\alpha_2\bigr) \wedge \alpha_0 \biggr),
\end{align}
and find the conditions for which they are closed and with integral period, namely $[\Theta_i] \in H^2(Q_s^3, \mathbb{Z})$. We apply Proposition 2.3, \cite{Twisting}, for this last part. If
\begin{equation*}
\dif \Theta_1 = 0 = \dif{} \biggl(\frac{3}{h^2-s^2}\bigl(g_{UV}\alpha_1+g_{VV}\alpha_2\bigr) \wedge \alpha_0 \biggr)
\end{equation*}
we get
\begin{equation}
\label{1}
\frac{g_{UV}}{h^2}\dif\thinspace (h^2) \wedge \alpha_1 \wedge \alpha_0 + \frac{g_{VV}}{h^2}\dif\thinspace (h^2) \wedge \alpha_2 \wedge \alpha_0 = \dif g_{UV} \wedge \alpha_1 \wedge \alpha_0 + \dif g_{VV} \wedge \alpha_2 \wedge \alpha_0.
\end{equation}
Similarly $\dif \Theta_2 = 0$ yields
\begin{equation}
\label{2}
\frac{g_{UU}}{h^2}\dif \thinspace(h^2) \wedge \alpha_1 \wedge \alpha_0 + \frac{g_{UV}}{h^2}\dif\thinspace (h^2) \wedge \alpha_2 \wedge \alpha_0 = \dif g_{UU} \wedge \alpha_1 \wedge \alpha_0 + \dif g_{UV} \wedge \alpha_2 \wedge \alpha_0.
\end{equation}
Under the conditions \eqref{1} and \eqref{2} one can apply the same process as in Paragraph 2.1, \cite{Twisting}: we construct a principal $T^2$-bundle $E^5 \rightarrow Q^3$ with $s$-dependent connection one-forms $\vartheta_1$ and $\vartheta_2$ such that $\dif \vartheta_k = \Theta_k$, $k=1,2$. The space $E^5$ must be thought of as the level set $\nu_M^{-1}(s)$ of the previous section. So as $s$ varies we have a five-dimensional foliation of a six-dimensional manifold $M = E \times (c,d)$, for some real numbers $c <d$. To construct a nearly K\"ahler structure on $M$ starting from $E^5$, we flow the $\vartheta_k$'s and the $\alpha_i$'s along the normal vector field to $E^5$, given by $\partial / \partial s = \bigl(9(h^2-s^2)\bigr)^{-1}\dif s^{\sharp}$.

In order to establish which equations must be satisfied, we first define $\sigma, \psi_{\pm}$ as in \eqref{sigma}--\eqref{psi_-}, then impose the nearly K\"ahler conditions as we have done above, getting \eqref{theta_2*}--\eqref{forme3*}. Notice that on $M$, the differential $\dif{}$ can be split as the sum of the differential on $E^5$ and the one in the remaining direction: we write $\dif{}_{6} = \dif{}_{5} + \dif{}_{s}$, where $\dif{}_{5}$ is the differential on $E^5$ and $\dif{}_{ s} \gamma = \gamma' \wedge \dif s$, the prime denoting the derivative with respect to $s$ of the form $\gamma$. We use this on \eqref{theta_2*}--\eqref{forme3*} and then contract with $\partial/\partial s$. The equations found will tell us how our forms evolve in the direction defined by $\partial/\partial s$.

We have seen that $\dif\sigma = 3\psi_+$ implies \eqref{theta_1*} in particular. We rewrite this equation as
\begin{align*}
s \dif{}_{ 5}\vartheta_1 + s\dif{}_{ s} \vartheta_1 & = \dif{}_{ 3}\alpha_1 + \dif{}_{ s}\alpha_1 \\
& \qquad - \frac{1}{h^2-s^2}\bigl(3g_{UV}\alpha_1 \wedge \alpha_0 + 3g_{VV}\alpha_2 \wedge \alpha_0 + s \alpha_1 \wedge \dif \nu_M \bigr).
\end{align*}
By assumption, on $E^5$ we have
\begin{equation*}
s \dif{}_{ 5}\vartheta_1 = \dif{}_{ 3}\alpha_1 - \frac{1}{h^2 - s^2}\bigl(3g_{UV}\alpha_1 + 3g_{VV}\alpha_2\bigr) \wedge \alpha_0,
\end{equation*}
so we can simplify our equation getting
\begin{equation*}
\vartheta_1' \wedge \dif s = \frac{1}{s}\alpha_1' \wedge \dif s - \frac{1}{h^2-s^2}\alpha_1 \wedge \dif s.
\end{equation*}
Contracting with $\partial/\partial s$, we obtain
\begin{equation}
\label{result1}
\vartheta_1' = \frac{1}{s}\alpha_1' - \frac{1}{h^2-s^2}\alpha_1.
\end{equation}
Similarly, from \eqref{theta_2*} we have
\begin{equation}
\label{result2}
\vartheta_2' = \frac{1}{s}\alpha_2' - \frac{1}{h^2-s^2}\alpha_2.
\end{equation}
From \eqref{forme1*} and \eqref{alpha_0} we get
\begin{align}
\label{result3}
\vartheta_2' \wedge \alpha_1 - \vartheta_1'\wedge \alpha_2 & = \frac{-4h^2-6s^2+3s(h^2)'}{3\bigl(h^2-s^2\bigr)^2}\alpha_1 \wedge \alpha_2  \nonumber \\
& \qquad + \frac{s}{h^2-s^2}(\alpha_2 \wedge \alpha_1)' + \frac{1}{3\bigl(h^2-s^2\bigr)^2}\dif{}_{ 3}(h^2) \wedge \alpha_0,
\end{align}
and by \eqref{alpha_0} itself we find
\begin{equation}
\label{result4}
\alpha_0' = \frac{4s}{3\bigl(h^2-s^2\bigr)}\alpha_0.
\end{equation}
The remaining equations yield
\begin{align}
\label{result5}
\alpha_0 \wedge \vartheta_2' & = \frac{s}{\bigl(h^2-s^2\bigr)^2}(h^2)'\alpha_2 \wedge \alpha_0 + \frac{1}{3\bigl(h^2-s^2\bigr)^2}\dif{}_{ 3}(h^2) \wedge \bigl(g_{UU}\alpha_1 + g_{UV}\alpha_2\bigr) \nonumber \\
& \qquad + \frac{s}{h^2-s^2}\alpha_2' \wedge \alpha_0 + \frac{h^2-3s^2}{3\bigl(h^2-s^2\bigr)^2}\alpha_2 \wedge \alpha_0 - \frac{1}{3\bigl(h^2-s^2\bigr)}\dif{}_{ 3}\bigl(g_{UU}\alpha_1 + g_{UV}\alpha_2\bigr),
\end{align}
\begin{align}
\label{result6}
\alpha_0 \wedge \vartheta_1' & = \frac{s}{\bigl(h^2-s^2\bigr)^2}(h^2)'\alpha_1 \wedge \alpha_0 - \frac{1}{3\bigl(h^2-s^2\bigr)^2}\dif{}_{ 3}(h^2) \wedge \bigl(g_{UV}\alpha_1 + g_{VV}\alpha_2\bigr) \nonumber \\
& \qquad + \frac{s}{h^2-s^2}\alpha_1' \wedge \alpha_0 + \frac{h^2-3s^2}{3\bigl(h^2-s^2\bigr)^2}\alpha_1 \wedge \alpha_0 + \frac{1}{3\bigl(h^2-s^2\bigr)}\dif{}_{ 3}\bigl(g_{UV}\alpha_1 + g_{VV}\alpha_2\bigr).
\end{align}
If we set $\alpha_1' = \sum_i a_{1i}\alpha_i, \alpha_2' = \sum_j a_{2j}\alpha_j$, and
\begin{equation*}
\dif{}_{ 3}\alpha_1 = \sum_{i<j}b_{ij}\alpha_i \wedge \alpha_j, \quad \dif{}_{ 3}\alpha_2 = \sum_{i < j}c_{ij}\alpha_i \wedge \alpha_j,
\end{equation*}
we can find equations giving $a_{10},a_{11},a_{12},a_{20},a_{21},a_{22},g_{UU}',g_{UV}',g_{VV}', (h^2)'$. Denote by $X_i$ the dual of $\alpha_i$. Using \eqref{result1} and \eqref{result2} in \eqref{result3}, \eqref{result5} and \eqref{result6}, we get
\begin{align*}
(\alpha_2 \wedge \alpha_1)' & = - \frac{s^2}{h^2\bigl(h^2-s^2\bigr)}(h^2)'\alpha_1 \wedge \alpha_2 + \frac{10s}{3\bigl(h^2-s^2\bigr)}\alpha_1 \wedge \alpha_2 - \frac{s}{3h^2\bigl(h^2-s^2\bigr)}\dif{}_{ 3}(h^2) \wedge \alpha_0, \\
 \alpha_0 \wedge \alpha_2' & = - \frac{s^2}{h^2\bigl(h^2-s^2\bigr)}(h^2)'\alpha_0 \wedge \alpha_2 + \frac{2s}{3\bigl(h^2-s^2\bigr)}\alpha_0 \wedge \alpha_2\\
& \qquad + \frac{s}{3h^2\bigl(h^2-s^2\bigr)}\dif{}_{ 3}(h^2) \wedge \bigl(g_{UU}\alpha_1 + g_{UV}\alpha_2\bigr) - \frac{s}{3h^2}\dif{}_{ 3}\bigl(g_{UU}\alpha_1 + g_{UV}\alpha_2\bigr),  \\
 \alpha_0 \wedge \alpha_1' & = - \frac{s^2}{h^2\bigl(h^2-s^2\bigr)}(h^2)' \alpha_0 \wedge \alpha_1 + \frac{2s}{3\bigl(h^2-s^2\bigr)}\alpha_0 \wedge \alpha_1\\
& \qquad - \frac{s}{3h^2\bigl(h^2-s^2\bigr)}\dif{}_{ 3}(h^2) \wedge \bigl(g_{UV}\alpha_1 + g_{VV}\alpha_2\bigr) + \frac{s}{3h^2}\dif{}_{ 3}\bigl(g_{UV}\alpha_1 + g_{VV}\alpha_2\bigr).
\end{align*}
From these equations, comparing the coefficients of $\alpha_0 \wedge \alpha_1, \alpha_0 \wedge \alpha_2$ and $\alpha_1 \wedge \alpha_2$, we find
\begin{align}
\label{a_ij_start}
a_{10} & = \frac{s}{3h^2\bigl(h^2-s^2\bigr)}X_2(h^2), \quad a_{20} = \frac{-s}{3h^2\bigl(h^2-s^2\bigr)}X_1(h^2), \\[5pt]
a_{21} & = \frac{g_{UU}s}{3h^2\bigl(h^2-s^2\bigr)}X_0(h^2) - \frac{s}{3h^2}\bigl(X_0(g_{UU})+g_{UU}b_{01}+g_{UV}c_{01}\bigr), \\[5pt]
a_{12} & = \frac{-g_{VV}s}{3h^2\bigl(h^2-s^2\bigr)}X_0(h^2) + \frac{s}{3h^2}\bigl(X_0(g_{VV})+g_{UV}b_{02}+g_{VV}c_{02}\bigr), \\[5pt]
(h^2)' & = -\frac{2h^2}{s} + \frac{h^2-s^2}{3s}\bigl((b_{01}-c_{02})g_{UV}+c_{01}g_{VV}-b_{02}g_{UU}\bigr),\\[5pt]
a_{11} & = \frac{8s}{3\bigl(h^2-s^2\bigr)}+\frac{s}{3h^2}\bigl(X_0(g_{UV})+b_{02}g_{UU}+c_{02}g_{UV}\bigr)-\frac{sg_{UV}}{3h^2\bigl(h^2-s^2\bigr)}X_0(h^2), \\[5pt]
\label{a_ij_end}
a_{22} & = \frac{8s}{3\bigl(h^2-s^2\bigr)}-\frac{s}{3h^2}\bigl(X_0(g_{UV})+b_{01}g_{UV}+c_{01}g_{VV}\bigr)+\frac{sg_{UV}}{3h^2\bigl(h^2-s^2\bigr)}X_0(h^2).
\end{align}
Further, differentiating \eqref{theta_2*}, \eqref{theta_1*}, and repeating the same process, we get
\begin{align}
\label{gs}
g_{UU}' & = g_{UU}\Biggl(\frac{(h^2)'}{h^2-s^2}+\frac{1}{s}+a_{11}-\frac{2s}{3\bigl(h^2-s^2\bigr)}\Biggr) - \frac{h^2}{3s}c_{01}+g_{UV}a_{21}, \\
g_{UV}' & = g_{UV}\Biggl(\frac{(h^2)'}{h^2-s^2}+\frac{1}{s}+a_{22}-\frac{2s}{3\bigl(h^2-s^2\bigr)}\Biggr) - \frac{h^2}{3s}c_{02}+g_{UU}a_{12} + \frac{sX_0(h^2)}{3\bigl(h^2-s^2\bigr)}, \\
g_{UV}' & = g_{UV}\Biggl(\frac{(h^2)'}{h^2-s^2}+\frac{1}{s}+a_{11}-\frac{2s}{3\bigl(h^2-s^2\bigr)}\Biggr) + \frac{h^2}{3s}b_{01}+g_{VV}a_{21} - \frac{sX_0(h^2)}{3\bigl(h^2-s^2\bigr)}, \\
g_{VV}' & = g_{VV}\Biggl(\frac{(h^2)'}{h^2-s^2}+\frac{1}{s}+a_{22}-\frac{2s}{3\bigl(h^2-s^2\bigr)}\Biggr) + \frac{h^2}{3s}b_{02}+g_{UV}a_{12}.
\end{align}
These results imply that $\alpha_0,\alpha_1,\alpha_2,g_{UU},g_{UV},g_{VV}$ can be found from a system of first order ordinary differential equations, so by Cauchy theorem we find a unique local solution on $E^5 \times (s_0-\varepsilon,s_0+\varepsilon)$, for some $\varepsilon > 0$, where $s_0 \neq 0$ is an initial data. Observe that the value of $s_0$ is specified by $f$ and $h$ through the equation $f = 4h^2/(h^2-s_0^2)$. Finally, \eqref{result1} and \eqref{result2}, together with the expressions of $\alpha_1'$ and $\alpha_2'$ found, give differential equations for $\vartheta_1,\vartheta_2$, to which we can apply the same theorem. Thus we have the final result:

\begin{theorem}
\label{evo_equations}
Let $Q^3$ be a smooth $3$-manifold, $f > 4$ a smooth real function on $Q^3$, and $\{\alpha_i\}_{i=0,1,2}$ a basis of one-forms on $Q^3$ satisfying \eqref{relations_alpha}. Suppose there exists a smooth positive definite $G = \left(\begin{smallmatrix} g_{UU} & g_{UV} \\ g_{UV} & g_{VV} \end{smallmatrix} \right)$ on $Q^3$ such that \eqref{1} and \eqref{2} are fulfilled, and that $s=s_0 =  (1-4/f)^{1/2}h$ is constant. Put $h^2 = \det G$, and define $\Theta_1,\Theta_2$ by \eqref{big_theta_1} and \eqref{big_theta_2} for $s=s_0$.

Then, if $\Theta_k$'s have integral periods, there exist a $T^2$-bundle $E^5 \to Q^3$ with connection one-form $(\vartheta_1,\vartheta_2)$, such that $\dif \vartheta_k = \Theta_k$, and an $\varepsilon > 0$ such that $E^5 \times (s_0-\varepsilon,s_0+\varepsilon)$ has a unique nearly K\"ahler structure of the form \eqref{sigma}--\eqref{psi_-}.
\end{theorem}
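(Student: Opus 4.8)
The plan is to assemble the pieces already prepared in Section \ref{inverse_process} into a genuine existence statement, treating the construction in two stages: first build the bundle $E^5$, then flow in the $s$-direction. I would begin by checking that the hypotheses make the definitions consistent: since $G$ is positive definite, $h^2 = \det G > 0$ and $g_{UU} > 0$, and the constancy of $s_0 = (1-4/f)^{1/2}h$ (equivalently $f = 4h^2/(h^2-s_0^2)$ with $f > 4$) guarantees $h^2 - s_0^2 > 0$, so the one-forms $\Theta_1, \Theta_2$ of \eqref{big_theta_1}--\eqref{big_theta_2} are well-defined two-forms on $Q^3$. Conditions \eqref{1} and \eqref{2} are exactly what is needed to force $\dif\Theta_1 = \dif\Theta_2 = 0$ (this is the computation sketched before \eqref{1}), and closedness together with the integral-period hypothesis lets one invoke Proposition 2.3 of \cite{Twisting} to obtain a principal $T^2$-bundle $\pi\colon E^5 \to Q^3$ together with a connection one-form $(\vartheta_1,\vartheta_2)$ whose curvature is $(\Theta_1,\Theta_2)$, i.e.\ $\dif\vartheta_k = \pi^*\Theta_k$. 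From now on all of $f, G, h, \alpha_0,\alpha_1,\alpha_2$ are pulled back to $E^5$ without change of notation, so that \eqref{dtheta} holds on $E^5$ by construction.

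The second stage is the flow. On the would-be six-manifold $M = E^5 \times I$, with $I$ an open interval about $s_0$, I would prescribe $\sigma,\psi_\pm$ by the formulas \eqref{sigma}--\eqref{psi_-}, with $\nu_M$ replaced by the coordinate $s$; the metric $g$ is then given by \eqref{metric}. The one-forms $\vartheta_1,\vartheta_2,\alpha_0,\alpha_1,\alpha_2$ and the functions $g_{UU},g_{UV},g_{VV}$ are to be regarded as $s$-dependent objects on $E^5$, and the system \eqref{result1}--\eqref{result2}, \eqref{result4}, together with \eqref{a_ij_start}--\eqref{a_ij_end} and \eqref{gs}, is a first-order ODE system in $s$ for the unknowns $(a_{ij}, g_{UU},g_{UV},g_{VV}, (h^2)')$ and hence, via $\alpha_i' = \sum_j a_{ij}\alpha_j$ and $\alpha_0' = \frac{4s}{3(h^2-s^2)}\alpha_0$ and \eqref{result1}--\eqref{result2}, for the forms themselves; the right-hand sides are smooth functions of the current data and of the $\dif_3$-structure constants $b_{ij}, c_{ij}$ of $Q^3$ wherever $h^2 - s^2 \neq 0$, which holds on a neighbourhood of $s_0$. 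By the Picard--Lindelöf theorem there is $\varepsilon > 0$ and a unique solution on $E^5 \times (s_0-\varepsilon, s_0+\varepsilon)$ with the prescribed initial data at $s = s_0$; shrinking $\varepsilon$ if necessary keeps $G$ positive definite and $h^2 - s^2 > 0$, so $(\sigma,\psi_\pm)$ remains a genuine $\mathrm{SU}(3)$-structure.

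It then remains to verify that this $\mathrm{SU}(3)$-structure is nearly Kähler, i.e.\ that \eqref{nk_condition} holds on $M$. This is where the bookkeeping has to be done carefully, and it is the step I expect to be the main obstacle — not because of any conceptual difficulty but because one must check that the evolution equations, which were \emph{derived} from \eqref{theta_2*}--\eqref{forme3*} by splitting $\dif_6 = \dif_5 + \dif_s$ and contracting with $\partial/\partial s$, are in fact \emph{equivalent} to those equations and not merely consequences. Concretely: $\dif\sigma - 3\psi_+$ and $\dif\psi_- + 2\sigma\wedge\sigma$ are $s$-dependent forms on $M$; writing each as (a form with no $\dif s$) $+\,\dif s \wedge$ (a form with no $\dif s$), the ``$\dif s$-free'' components vanish precisely because \eqref{dtheta} and \eqref{relations_alpha} hold on every slice $E^5 \times \{s\}$ (the slice-wise structure equations \eqref{theta_2*}--\eqref{forme3*}, \eqref{alpha_0}--\eqref{alpha_2_ls} were shown in Section \ref{three_dimensional_quotients} to follow from \eqref{relations_alpha} and \eqref{dtheta}, and these propagate in $s$ by the ODE system — one should check this propagation, e.g.\ by a Gronwall/uniqueness argument showing the slice-wise relations are preserved by the flow), while the components containing $\dif s$ vanish exactly because of \eqref{result1}--\eqref{result6} and \eqref{gs}, which is how those were obtained. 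Assembling these two observations gives $\dif\sigma = 3\psi_+$ and $\dif\psi_- = -2\sigma\wedge\sigma$ on all of $E^5 \times (s_0-\varepsilon,s_0+\varepsilon)$. Uniqueness of the resulting nearly Kähler structure of the form \eqref{sigma}--\eqref{psi_-} is immediate from uniqueness in Picard--Lindelöf once the initial data at $s_0$ is fixed by $(Q^3, f, G, \alpha_i)$, completing the proof.
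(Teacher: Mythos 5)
Your proposal is correct and follows essentially the same route as the paper: derive the first-order ODE system \eqref{final_system1} for the forms and metric coefficients, apply Picard--Lindel\"of at $s=s_0$, and read off the nearly K\"ahler structure from \eqref{sigma}--\eqref{psi_-}. In fact your write-up is more careful than the paper's own two-sentence proof, which simply asserts that the unique solution of the system ``corresponds to'' a nearly K\"ahler structure; the propagation issue you flag --- that the slice-wise relations \eqref{relations_alpha} and \eqref{dtheta} must be shown to be preserved by the flow, so that the evolution equations are genuinely equivalent to, and not merely consequences of, \eqref{nk_condition} --- is left entirely implicit there, so your concern is well placed but is not something the paper's proof resolves either.
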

\begin{proof}
Let $'$ denote differentiation with respect to $s$ and assume the functions $a_{ij}$'s are those listed in \eqref{a_ij_start}--\eqref{a_ij_end}. Then our forms satisfy the equations
\begin{gather}
\label{final_system1}
\alpha_0' = \frac{4s}{3(h^2-s^2)}\alpha_0, \quad
\alpha_1' = \sum_{i=0}^2 a_{1i}\alpha_i, \quad
\alpha_2' = \sum_{j=0}^2 a_{2j}\alpha_j, \\
\vartheta_1' = \frac{1}{s}\alpha_1' - \frac{1}{h^2-s^2}\alpha_1, \quad
\vartheta_2' = \frac{1}{s}\alpha_2' - \frac{1}{h^2-s^2}\alpha_2.
\end{gather}
For the initial data $s=s_0 = (1-4/f)^{1/2}h$ they have a unique solution, which corresponds to a nearly K\"ahler structure on $E^5 \times (s_0-\varepsilon,s_0+\varepsilon)$, for some $\varepsilon > 0$.
\end{proof}

\section{Invariant structures on the Heisenberg group}
\label{heisenberg_group}
In this section we are going to study the construction described above in the particular case where $Q^3$ is the three-dimensional Heisenberg group $H_3$. Making specific choices of the forms involved and assuming \eqref{alpha_0_ls}--\eqref{alpha_2_ls}, we write the equations in Theorem \ref{evo_equations} and solve them getting explicit solutions. Finally, Proposition \ref{general_heisenberg} proves our solution is general.

Let us consider the Heisenberg group $H_3$, i.e.\ the unipotent Lie group given by the upper triangular real matrices of the form $
\left(
\begin{smallmatrix}
1 & a & b \\
0 & 1 & c \\
0 & 0 & 1
\end{smallmatrix}
\right)
$. Its Lie algebra is generated by
$$
E_0 =
\left(
\begin{smallmatrix}
0 & 1 & 0 \\
0 & 0 & 0 \\
0 & 0 & 0
\end{smallmatrix}
\right),
E_1 =
\left(
\begin{smallmatrix}
0 & 0 & 0 \\
0 & 0 & 1 \\
0 & 0 & 0
\end{smallmatrix}
\right),
E_2 =
\left(
\begin{smallmatrix}
0 & 0 & -1 \\
0 & 0 & 0 \\
0 & 0 & 0
\end{smallmatrix}
\right).
$$ They satisfy the commutation relations $\sbr{E_1,E_2} = -E_0, \sbr{E_0,E_1} = \sbr{E_0,E_2}=0$. If $\sigma_i$ is the dual of $E_i$, then we have
\begin{equation*}
\dif \sigma_0 = \sigma_1 \wedge \sigma_2, \quad \dif \sigma_1 = 0, \quad \dif \sigma_2 = 0.
\end{equation*}
Define $\alpha_k \coloneqq f_k(s)\sigma_k$, and set $g_{UU}(s) = g_{VV}(s) \eqqcolon h(s)$, and $g_{UV}(s_0)=0$ for some initial data $s_0 \neq 0$. With this choice, equations \eqref{1} and \eqref{2} are automatically fulfilled. Then, according to Theorem \ref{evo_equations}, there exists a $T^2$-bundle $E^5 \rightarrow H_3$ with connection one-forms $\vartheta_1,\vartheta_2$ satisfying
\begin{equation*}
s\dif{}_{ 5} \vartheta_1 = - \frac{3h}{h^2-s^2}\alpha_2 \wedge \alpha_0, \quad s\dif{}_{ 5} \vartheta_2 = \frac{3h}{h^2-s^2}\alpha_1 \wedge \alpha_0.
\end{equation*}
Equations $\dif{}_{ 3} \alpha_k = 0$, $k=1,2$, imply that all the coefficients $b_{ij},c_{ij}$ vanish. Furthermore, we have an algebraic relation among the $f_k$'s given by $f_0/f_1f_2 = 4h^2/(h^2-s^2)$. Then we can compute: $a_{10}=a_{20}=a_{12}=a_{21}=0, a_{11} = a_{22} = 8s/3(h^2-s^2)$, and $h' = g_{UU}' = g_{VV}' = -h/s, g_{UV}' = 0$. So the following differential equations for $\alpha_0,\alpha_1,\alpha_2,\vartheta_1,\vartheta_2$ hold:
\begin{equation*}
\alpha_0' = \frac{4s}{3\bigl(h^2-s^2\bigr)}\alpha_0,  \quad \alpha_k' = \frac{8s}{3\bigl(h^2-s^2\bigr)} \alpha_k, \quad
\vartheta_k' = \frac{5}{3\bigl(h^2-s^2\bigr)}\alpha_k, \quad k=1,2.
\end{equation*}
Since $h = g_{UU} > 0$, we obtain the expression $h(s) = \lvert s_0h(s_0)\rvert/\lvert s\rvert \eqqcolon C/s$, and the following differential equations for $f_0,f_1,f_2$:
\begin{equation*}
f_0' = -f_0\Biggl(\frac{4s}{3\bigl(h^2-s^2\bigr)} \Biggr), \quad f_k' = -f_k \Biggl(\frac{8s}{3\bigl(h^2-s^2\bigr)} \Biggr), \quad k = 1,2.
\end{equation*}
Hence one can solve them getting
\begin{equation*}
f_0(s) = f_0(s_0)\left(\frac{C^2-s^4}{C^2-s_0^4}\right)^{1/3}, \quad f_k(s) = f_k(s_0)\left(\frac{C^2-s^4}{C^2-s_0^4}\right)^{2/3}, \quad k =1,2.
\end{equation*}
Let us set $f_0(s_0) = f_1(s_0) = f_2(s_0) = f(s_0)^{-1}$. In this case, having the expressions of $h, f_0,f_1,f_2$, we can write equations \eqref{metric}--\eqref{psi_-} explicitly: for $0 \neq s^2 < \lvert C \rvert$ we obtain
\begin{align}
\label{nk_metric}
g & = \frac{s^2}{9\bigl(
C^2-s^4\bigr)}\dif s^{\otimes 2} + \frac{C}{s}\Bigl(\vartheta_1^{\otimes 2} + \vartheta_2^{\otimes 2}\Bigr) \nonumber \\
& \quad + \frac{s^2\bigl(C^2-s_0^4\bigr)}{16C^4}\Biggl(\biggl(\frac{C^2-s_0^4}{C^2-s^4} \biggr)^{1/3}\sigma_0^{\otimes 2}+ \frac{C}{s}\biggl(\frac{C^2-s^4}{C^2-s_0^4} \biggr)^{1/3}\bigl(\sigma_1^{\otimes 2}+\sigma_2^{\otimes 2}\bigr) \Biggr), \\
\label{nk_sigma}
\sigma & = \frac{s^2}{12C^2}\biggl(\frac{C^2-s_0^4}{C^2-s^4} \biggr)^{2/3}\dif s \wedge \sigma_0 + s\vartheta_1 \wedge \vartheta_2 \nonumber \\
& + \frac{C^2-s_0^4}{4C^2}\biggl(\frac{C^2-s^4}{C^2-s_0^4} \biggr)^{2/3}\bigl(\vartheta_2 \wedge \sigma_1 - \vartheta_1 \wedge \sigma_2 \bigr)-\frac{s^3\bigl(C^2-s_0^4\bigr)}{16C^4}\biggl(\frac{C^2-s^4}{C^2-s_0^4} \biggr)^{1/3}\sigma_1 \wedge \sigma_2, \\
\label{nk_real_volume}
\psi_+ & = \frac{1}{3}\dif s \wedge \Biggl(\vartheta_1 \wedge \vartheta_2 + \frac{s^3}{4C^2}\biggl(\frac{C^2-s_0^4}{C^2-s^4} \biggr)^{1/3} \bigl(\vartheta_1 \wedge \sigma_2 - \vartheta_2 \wedge \sigma_1 \bigr)\Biggr) \nonumber \\
& \quad -\frac{C^2-s_0^4}{16C^4}\Biggl(\frac{s^2}{3}\biggl(\frac{C^2-s^4}{C^2-s_0^4} \biggr)^{1/3}\dif s \wedge \sigma_1 \wedge \sigma_2 +Cs\bigl(\vartheta_1 \wedge \sigma_1 + \vartheta_2 \wedge \sigma_2 \bigr) \wedge \sigma_0\Biggr), \\
\label{nk_imaginary_volume}
\psi_- & = \frac{s}{12C}\biggl(\frac{C^2-s_0^4}{C^2-s^4} \biggr)^{1/3}\dif s \wedge \bigl(\vartheta_1 \wedge \sigma_1 + \vartheta_2 \wedge \sigma_2 \bigr) + \frac{C^2-s_0^4}{4C^2}\biggl(\frac{C^2-s^4}{C^2-s_0^4} \biggr)^{1/3}\vartheta_1 \wedge \vartheta_2 \wedge \sigma_0 \nonumber \\
& \quad + \frac{s^2\bigl( C^2-s_0^4\bigr)}{16C^4} \Biggl(s\bigl(\vartheta_1 \wedge \sigma_2 - \vartheta_2 \wedge \sigma_1 \bigr) -\frac{\bigl(C^2-s_0^4\bigr)^{1/3}\bigl(C^2-s^4\bigr)^{2/3}}{4C^2}\sigma_1 \wedge \sigma_2 \Biggr) \wedge \sigma_0.
\end{align}
\begin{remark}
By the expression of the metric $g$ in \eqref{nk_metric} we can observe that the fiber blows up when $s \rightarrow 0$, whereas the remaining four-dimensional subspace collapses to a point. If $s^2 \rightarrow \lvert C\rvert$, the fiber stabilises, a two-dimensional subspace of the base space collapses to a point, and the rest blows up.
\end{remark}
The following proposition states this is indeed a general solution.
\begin{prop}
\label{general_heisenberg}
The nearly K\"ahler structure in \eqref{nk_metric}--\eqref{nk_imaginary_volume} gives a general left-invariant structure for the case of the Heisenberg group $H_3$.
\end{prop}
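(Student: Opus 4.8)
The plan is to show that the \emph{ansatz} made in Section~\ref{heisenberg_group}---$Q^3=H_3$, $\alpha_k=f_k(s)\sigma_k$ with the $\sigma_k$ the standard left-invariant coframe, $g_{UU}=g_{VV}=h$, and $g_{UV}(s_0)=0$---is no loss of generality among \emph{all} left-invariant nearly K\"ahler structures arising from the inverse construction over $H_3$. So I would start from a general left-invariant solution of Theorem~\ref{evo_equations} over $H_3$: an $s$-dependent positive definite $G=\bigl(\begin{smallmatrix}g_{UU}&g_{UV}\\g_{UV}&g_{VV}\end{smallmatrix}\bigr)$ of left-invariant (hence constant on $H_3$) functions, a left-invariant coframe $\alpha_0,\alpha_1,\alpha_2$ satisfying \eqref{relations_alpha}, and $f=4h^2/(h^2-s^2)>4$. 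The first step is to exploit the structure equations \eqref{relations_alpha}: since $\alpha_0,\alpha_1,\alpha_2$ are left-invariant one-forms on a $3$-manifold with $\dif\alpha_0=f\,\alpha_1\wedge\alpha_2$ and $\dif\alpha_1\wedge\alpha_0=\dif\alpha_2\wedge\alpha_0=0$ (note $\dif f=0$ by left-invariance, so $\dif f/f=0$), the pair $\alpha_1,\alpha_2$ spans a closed ideal modulo $\alpha_0$; a short argument with the Jacobi identity for a $3$-dimensional Lie algebra forces $\dif\alpha_1=\dif\alpha_2=0$ and $\dif\alpha_0=f\,\alpha_1\wedge\alpha_2$ with $f$ constant. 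Rescaling $\alpha_k$ by constants (i.e.\ choosing $f_k(s_0)$ appropriately) identifies this with the Heisenberg algebra: $\dif\sigma_0=\sigma_1\wedge\sigma_2$, $\dif\sigma_1=\dif\sigma_2=0$, and sets $\alpha_k(s_0)=f_k(s_0)\sigma_k$.

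The second step is to propagate this normalisation in $s$. The evolution equations \eqref{final_system1}--\eqref{gs} are an autonomous first-order ODE system in $(\alpha_i,\vartheta_k,g_{UU},g_{UV},g_{VV})$; by uniqueness (Cauchy), the solution is determined by its value at $s=s_0$. I would check that the subspace cut out by ``$\alpha_k$ proportional to $\sigma_k$'' and ``$g_{UU}=g_{VV}$, $g_{UV}=0$'' is invariant under the flow: with $b_{ij}=c_{ij}=0$ (which holds identically since $\dif_3\alpha_1=\dif_3\alpha_2=0$ is preserved), formulae \eqref{a_ij_start}--\eqref{a_ij_end} give $a_{10}=a_{20}=a_{12}=a_{21}=0$ and $a_{11}=a_{22}$, so $\alpha_k$ stays proportional to $\sigma_k$; and \eqref{gs} together with $X_0(h^2)=0$ (left-invariance) gives $g_{UU}'=g_{VV}'$, $g_{UV}'=0$, so the condition $g_{UU}=g_{VV}$, $g_{UV}=0$ is preserved from $s=s_0$ onward. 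Hence every left-invariant solution over $H_3$ lies in the family already integrated explicitly in \eqref{nk_metric}--\eqref{nk_imaginary_volume}, and the remaining freedom is exactly the constants $C=s_0h(s_0)$ and $f_k(s_0)=f(s_0)^{-1}$ that appear there (plus an overall automorphism of $H_3$ and a constant rotation in the $(\sigma_1,\sigma_2)$-plane, which do not change the structure up to isomorphism).

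I would close by recording that any residual normalisations---the choice of sign/scale hidden in $h(s)=C/s$, and the common value $f_0(s_0)=f_1(s_0)=f_2(s_0)$---are either forced by the positivity constraints ($g_{UU}>0$, $h^2=\det G>0$, $f>4$) or removable by a bundle automorphism, so that \eqref{nk_metric}--\eqref{nk_imaginary_volume} is the general left-invariant nearly K\"ahler structure produced by the construction over $H_3$.

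The main obstacle I anticipate is the first step: justifying rigorously that the abstract structure equations \eqref{relations_alpha} on a left-invariant coframe of a $3$-dimensional Lie algebra, \emph{combined} with positivity of $G$ and the auxiliary equations \eqref{1}--\eqref{2}, pin the Lie algebra down to $\mathfrak{h}_3$ and not, say, to $\mathbb{R}^3$ or another unimodular algebra---one must rule out the degenerate cases where $\alpha_1\wedge\alpha_2$ fails to be exact or $f$ could be made to vanish. After that, the invariance-of-the-ansatz computation in the second step is routine bookkeeping with \eqref{a_ij_start}--\eqref{gs}, already essentially carried out in the displayed computations of Section~\ref{heisenberg_group}.
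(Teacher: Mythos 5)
Your overall strategy --- normalise the left-invariant data at $s=s_0$, then invoke uniqueness of the evolution equations to conclude that every left-invariant solution lies in the explicitly integrated family --- is the same as the paper's; the paper's proof consists precisely of the initial-time normalisation, the flow-invariance of the ansatz having already been computed in Section \ref{heisenberg_group}. However, two steps in your write-up are not correct as stated. First, the mechanism you propose for forcing $\dif\alpha_1=\dif\alpha_2=0$ does not work: the relations \eqref{relations_alpha} together with the Jacobi identity do \emph{not} imply closedness of $\alpha_1,\alpha_2$ on a general unimodular three-dimensional Lie algebra. For instance the standard coframe on $\mathrm{SU}(2)$, with $\dif e^i=e^j\wedge e^k$ cyclically, satisfies all three relations in \eqref{relations_alpha} with $f\equiv 1$, yet no $e^i$ is closed; so the ``short argument with the Jacobi identity'' you invoke does not exist in that generality, and your closing worry about pinning down the Lie algebra is moot since $Q^3=H_3$ is part of the hypothesis. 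What does work, and is what the paper uses, is the specific structure of $\mathfrak{h}_3$: every exact left-invariant two-form is a multiple of $\tau_1\wedge\tau_2$, hence $f\,\dif\beta_0=\beta_1\wedge\beta_2=c_1\tau_1\wedge\tau_2$; since a nonzero decomposable two-form determines the two-plane of its factors, $\beta_1,\beta_2\in\Span\{\tau_1,\tau_2\}=\ker\dif$, and closedness follows.

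Second, you never actually reduce a general initial metric $G(s_0)$ to the normal form $g_{UU}=g_{VV}=h$, $g_{UV}=0$; you only verify that this condition, once imposed, is preserved by the flow. Rescaling the individual $\alpha_k$ by constants plus ``a constant rotation in the $(\sigma_1,\sigma_2)$-plane'' is not sufficient: a rotation diagonalises $G$ but does not equalise its eigenvalues. The paper's normalisation is the correct one: choose $\sigma_1,\sigma_2\in\Span\{\tau_1,\tau_2\}$ that are $\tilde g$-orthogonal with equal norms \emph{and} satisfy $\sigma_1\wedge\sigma_2=\beta_1\wedge\beta_2$, i.e.\ apply the $\mathrm{SL}(2,\mathbb{R})$ transformation taking the positive definite form to $(\det G)^{1/2}\id$ while preserving the area form, so that the structure equation $\dif\beta_0=f^{-1}\beta_1\wedge\beta_2$ is untouched. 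With these two repairs your argument goes through, and your second step (invariance of the ansatz under the flow via $b_{ij}=c_{ij}=0$ and the formulae \eqref{a_ij_start}--\eqref{gs}) is sound and coincides with the computation already displayed in the text.
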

\begin{proof}
As seen in Remark \ref{betas}, the geometry of $Q^3$ can be described by three one-forms $\beta_0,\beta_1,\beta_2$ satisfying
\begin{equation*}
\dif \beta_0 = \frac{1}{f}\beta_1 \wedge \beta_2, \quad \dif \beta_1 \wedge \beta_0 = 0, \quad \dif \beta_2 \wedge \beta_0 = 0.
\end{equation*}
Denote by $\tau_0, \tau_1, \tau_2$ a dual basis of the Lie algebra of $H_3$ satisfying $\dif \tau_0 = \tau_1 \wedge \tau_2$ and $\dif \tau_1 = \dif \tau_2 = 0$. In our particular case we can observe that $\dif \beta_0 \in \Span \{\tau_1 \wedge \tau_2 \}$, so $\beta_1 \wedge \beta_2 = c_1\tau_1 \wedge \tau_2$ for some real number $c_1 \neq 0$. This happens only when $\beta_1,\beta_2 \in \Span \{\tau_1,\tau_2\}$, so $\dif \beta_1 = \dif \beta_2 = 0$, and then $\dif \beta_1 \wedge \beta_0 = \dif \beta_2 \wedge \beta_0 = 0$, as we wanted.

Therefore $\beta_0 = \frac{c_1}{f}\tau_0+a\tau_1+b\tau_2$ for some real numbers $a,b$. Now define $\sigma_0 \coloneqq f\beta_0$, and choose $\sigma_1,\sigma_2 \in \Span\{\tau_1,\tau_2\}$ so that $\sigma_1$ is $\tilde{g}$-orthogonal to $\sigma_2$, $\lVert \sigma_1 \rVert_{\tilde{g}} = \lVert \sigma_2 \rVert_{\tilde{g}}$, $\sigma_1 \wedge \sigma_2 = \beta_1 \wedge \beta_2$, and $\sigma_1 \wedge \sigma_2 = c_2\tau_1 \wedge \tau_2$, for some positive constant $c_2$. Define $\tilde{\beta_0} \coloneqq \beta_0, \tilde{\beta_1} \coloneqq \sigma_1$, and $\tilde{\beta_2} \coloneqq \sigma_2$. This new dual frame satisfies
\begin{equation*}
\dif \tilde{\beta_0} = \frac{1}{f}\tilde{\beta_1} \wedge \tilde{\beta_2}, \quad \dif \tilde{\beta_1} = \dif \tilde{\beta_2} = 0, \quad \tilde{g} = h \id,
\end{equation*}
As in the end of Section \ref{three_dimensional_quotients}, we can define three one-forms $\tilde{\alpha}_i, i =0,1,2$ such that $\tilde{\beta}_0 \eqqcolon \tilde{\alpha}_0$ and $\tilde{\beta}_i \eqqcolon f\tilde{\alpha}_i, i = 1,2$. Hence
\begin{align*}
\tilde{\alpha}_0(s_0) & = \tilde{\beta}_0(s_0) = \frac{1}{f(s_0)}\sigma_0 \\
\tilde{\alpha}_k(s_0) & = \frac{1}{f(s_0)}\tilde{\beta}_k(s_0) = \frac{1}{f(s_0)}\sigma_k, \quad k=1,2,
\end{align*}
so if $f_0,f_1,f_2$ are such that $\tilde{\alpha}_k = f_k(s)\sigma_k$, we get $f_0(s_0) = f_1(s_0) = f_2(s_0) = f(s_0)^{-1}$. Thus it is always possible to restrict ourselves to the case studied above.
\end{proof}

\providecommand{\bysame}{\leavevmode\hbox to3em{\hrulefill}\thinspace}
\providecommand{\MR}{\relax\ifhmode\unskip\space\fi MR }
\providecommand{\MRhref}[2]{%
  \href{http://www.ams.org/mathscinet-getitem?mr=#1}{#2}
}
\providecommand{\href}[2]{#2}

\small

(G. Russo) Department of Mathematics and Centre for Quantum Geometry of Moduli Spaces, Aarhus University, Ny Munkegade 118, Bldg 1530, DK-8000 Aarhus C, Denmark

\textit{E\/-mail address}: \texttt{giovanni.russo@math.au.dk}

\smallskip

(A.\ F.\ Swann) Department of Mathematics, Centre for Quantum Geometry of Moduli Spaces, Aarhus University, and Aarhus University Centre for Digitalisation, Big Data and Data Analytics, Ny Munkegade 118, Bldg 1530, DK-8000 Aarhus C, Denmark

\textit{E\/-mail address}: \texttt{swann@math.au.dk}

\end{document}